\numberwithin{equation}{section}
\DeclareMathOperator{\Ric}{Ric}
\DeclareMathOperator{\Rm}{Rm}
\DeclareMathOperator{\Weyl}{Weyl}
\DeclareMathOperator{\E}{E}
\DeclareMathOperator{\divergence}{div}
\DeclareMathOperator{\Vol}{Vol}
\DeclareMathOperator{\Eucl}{Eucl}
\newcommand{\<}{\left\langle}
\renewcommand{\>}{\right\rangle}
\renewcommand{\(}{\left(}
\renewcommand{\)}{\right)}
\renewcommand{\[}{\left[}
\renewcommand{\]}{\right]}
\newcommand{\eps}{\varepsilon}
\newtheorem{theorem}{Theorem}[section]
\newtheorem{proposition}[theorem]{Proposition}
\newtheorem{lemma}[theorem]{Lemma}
\begin{document}

\thanks{{\it Date:} October 1st, 2012}

\title[Blow-up solutions for linear perturbations of the Yamabe equation]{Blow-up solutions for linear perturbations of\\the Yamabe equation}

\author{Pierpaolo Esposito}
\address{Pierpaolo Esposito, Dipartimento di Matematica, Universit\`a degli Studi ``Roma Tre'',\,Largo S. Leonardo Murialdo 1, 00146 Roma, Italy}
\email{esposito@mat.uniroma3.it}

\author{Angela Pistoia}
\address{Angela Pistoia, Dipartimento SBAI, Universit\`a di Roma ``La Sapienza'', via Antonio Scarpa 16, 00161 Roma, Italy}
\email{pistoia@dmmm.uniroma1.it}

\author{J\'er\^ome V\'etois}
\address{J\'er\^ome V\'etois, Universit\'e de Nice - Sophia Antipolis, Laboratoire J.-A. Dieudonn\'e, UMR CNRS-UNS 7351, Parc Valrose, 06108 Nice Cedex 2, France}
\email{vetois@unice.fr}

\begin{abstract}
For a smooth, compact Riemannian manifold $(M,g)$ of dimension $N \ge3$, we are interested in the critical equation
$$\Delta_g u+\({N-2\over 4(N-1)}S_g+\eps   h\)u=u^{\frac{N+2}{N-2}}\quad\text{in}\ M\,,\quad u>0\quad\text{in}\ M\,,$$
where $\Delta_g$ is the Laplace--Beltrami operator, $S_g$ is the Scalar curvature of $(M,g)$, $h\in C^{0,\alpha}\(M\)$, and $\eps  $ is a small parameter.
\end{abstract}

\maketitle

\section{Introduction}

Letting $\(M,g\)$ be a smooth, compact Riemannian $N$--manifold, $N\geq 3$, we consider the solutions $u\in C^{2,\alpha}$ of the problem
\begin{equation}\label{Eq01}
\Delta_g u+\kappa u=c u^p,\quad u>0\quad\text{in }M\,,
\end{equation}
where $\Delta_g:=-\divergence_g\nabla$ is the Laplace--Beltrami operator, $\kappa\in C^{0,\alpha}\(M\)$, $\alpha\in\(0,1\)$, $c \in \mathbb{R}$, and $p>1$. 

\medskip 
When $\kappa=\alpha_N S_g$  and $p=2^*-1$, where $\alpha_N:=\frac{N-2}{4(N-1)}$, $S_g$ is the Scalar curvature of $(M,g)$ and $2^*:=\frac{2N}{N-2}$ is the critical Sobolev exponent, equation \eqref{Eq01} reads as 
\begin{equation}\label{Yam}
\Delta_g u+\frac{N-2}{4(N-1)} S_g u=c u^{\frac{N+2}{N-2}},\quad u>0\quad\text{in }M\,,
\end{equation}
and is referred to in the literature as the Yamabe problem. The constant $c$ can be restricted to the values $-1/1$ or $0$ depending on whether the {\it Yamabe invariant} of $(M,g)$, namely 
$$\mu_g(M)=\inf_{\widetilde g\in\[g\]}\(\Vol_{\widetilde g}\(M\)^{\frac{2-N}{N}}\int_M S_{\widetilde g}dv_{\widetilde g}\)$$
has negative/positive sign or vanishes, respectively, where $[g]=\{\phi g:\, \phi \in C^\infty(M),\, \phi>0 \}$ is the conformal class of $g$ and $\Vol_{\widetilde g}\(M\)$ is the volume of the manifold $\(M,\widetilde g\)$. If $u$ is a solution of \eqref{Yam}, then the metric $\widetilde g=u^{4/\(N-2\)} g$ has constant Scalar curvature and belongs to $[g]$. 

\medskip 
The Yamabe problem, raised by H. Yamabe~\cite{Yam} in '60, was firstly solved by Trudinger~\cite{Tru} when $\mu_g(M)\leq 0$. In this case, the solution is unique (up to a normalization when $\mu_g(M)=0$). In general, a solution of \eqref{Yam} can be found by a direct constrained minimization method. As shown by Aubin~\cite{Aub2}, the inequality 
\begin{equation} \label{keyineq}
\mu_g(M)<\mu_{g_0}(\mathbb{S}^N),
\end{equation} 
where $(\mathbb{S}^N, g_0)$ is the round sphere, is the key ingredient to show compactness of minimizing sequences, a non-trivial fact in view of the non-compactness of the Sobolev embedding $H_1^2(M) \hookrightarrow L^{2^*}(M)$. $(\mathbb{S}^N,g_0)$ has already constant Scalar curvature. For manifolds $(M,g)$ which are not conformally equivalent to $(\mathbb{S}^N,g_0)$ ($(M,g)\not=(\mathbb{S}^N,g_0)$ for short) with $\mu_g(M)>0$, the Yamabe equation \eqref{Yam} has been solved via \eqref{keyineq} by:  
\begin{itemize}
\item Aubin~\cite{Aub2} in the non-locally conformally flat case with $N\geq 6$, by exploiting the non-vanishing of the Weyl curvature tensor $\Weyl_g$ of $(M,g)$ in the construction of local test functions;  
\item Schoen~\cite{Sch1} when either $N=3,4,5$ or $(M,g) \not= (\mathbb{S}^N,g_0)$ is locally conformally flat, by exploiting the Positive Mass Theorem by Schoen--Yau~\cites{SchYau1,SchYau2} in the construction of global test functions
\end{itemize}
(see also Lee--Parker~\cite{LeePar} for a unified approach).

\medskip
From now on, we restrict our attention to the case where $\(M,g\)$ has {\it positive Yamabe invariant} $\mu_g(M)>0$. When $(M,g) \not= (\mathbb{S}^N,g_0)$, Schoen~\cite{Sch2} addressed the question of the compactness of Yamabe metrics, and he proved the compactness to be true in the locally conformally flat case~\cite{Sch2}. Recently, compactness of Yamabe metrics has been proved to be true for a general manifold $(M,g)\not=(\mathbb{S}^N,g_0)$ of dimension $N\le24$ by Khuri--Marques--Schoen~\cite{KhuMarSch}. Unexpectedly, compactness of Yamabe metrics has revealed to be false in general in dimensions $N\ge25$ by Brendle~\cites{Bre} and Brendle--Marques~\cite{BreMar}. Previous contributions where the compactness of Yamabe metrics is proved in lower dimensions are by Li--Zhu~\cite{LiZhu} ($N=3$), Druet~\cite{Dru2} ($N\leq 5$), Marques~\cite{Mar} ($N\leq 7$), and Li--Zhang ~\cites{LiZha1,LiZha2,LiZha3} ($N \leq 11$). In all these results, it is shown that sequences of solutions $\(u_k\)_{k\in\mathbb{N}}$ of (\ref{Eq01}) with $\kappa\equiv\alpha_N S_g$, $c=1$, and exponents $\(p_k\)_{k\in\mathbb{N}}$ in $[1+\varepsilon_0,2^*-1]$, $\varepsilon_0>0$ fixed, are pre-compact in $C^{2,\alpha}(M)$, $\alpha\in\(0,1\)$.

\medskip 
When $\kappa\not\equiv \alpha_N S_g$, the situation is different. When $\kappa<\alpha_N S_g$, Druet~\cites{Dru1,Dru2} (see also Druet--Hebey~\cite{DruHeb3} and Druet--Hebey--V\'etois~\cite{DruHebVet}) proved that compactness does hold for equation \eqref{Eq01} with $c=1$ and exponents $p$ in the range $\[1+\varepsilon_0,2^*-1\]$, for all dimensions $N\ge3$ (in case $N=3$, it is possible to write a more refined condition on the mass, see Li--Zhu~\cite{LiZhu}). As shown in Micheletti--Pistoia--V\'etois~\cite{MicPisVet} and Pistoia--V\'etois~\cite{PisVet}, in dimensions $N\ge4$, such a compactness result does not hold when $\kappa\(\xi_0\)>\alpha_n S_g\(\xi_0\)$ at some point $\xi_0\in M$ with a nondegeneracy assumption at $\xi_0$, and, see~\cite{MicPisVet}, compactness does not hold either in the supercritical range $p>2^*-1$ when $\kappa\(\xi_0\)<\alpha_N S_g\(\xi_0\)$ at some point $\xi_0\in M$. We also refer to Robert--V\'etois~\cite{RobVet}*{Theorem~2.3} where a special non-compactness result is obtained in dimension $N=6$ for potentials $\kappa>\alpha_N S_g$ (see also Druet~\cite{Dru1} and Druet--Hebey~\cites{DruHeb1,DruHeb2} in case of $\(M,g\)=(\mathbb{S}^N,g_0)$ with $N=6$). In the locally conformally flat case with $N\ge4$, Hebey--Vaugon~\cite{HebVau} proved that there always exists $\widetilde{g}\in\[g\]$ such that the equation $\Delta_{\widetilde{g}}u+\alpha_N\max_M(S_{\widetilde{g}})u=u^{2^*-1}$ in $M$ is not compact. In case $\(M,g\)=(\mathbb{S}^N,g_0)$ with $N\ge5$ and when $(\kappa-\alpha_N S_g)$ is a positive constant, Chen--Wei--Yan~\cite{ChenWeiYan} proved that equation \eqref{Eq01} with $c=1$ and $p=2^*-1$ is not compact (see also the constructions by Hebey--Wei~\cite{HebWei} in case $N=3$).

\medskip 
When the potential $\kappa$ varies, for manifolds $(M,g) \not= (\mathbb{S}^N,g_0)$ with $\mu_g(M)>0$, Druet~\cite{Dru2} (see also Druet--Hebey~\cite{DruHeb4}) proved that sequences of solutions $\(u_k\)_{k\in\mathbb{N}}$ of (\ref{Eq01}) with $c=1$, exponents $\(p_k\)_{k\in\mathbb{N}}$ in $[1+\varepsilon_0,2^*-1]$, and potentials $\(\kappa_k\)_{k\in\mathbb{N}}$, are pre-compact in $C^{2,\alpha}(M)$, $\alpha\in\(0,1\)$, when $n=3,4,5$ provided that $\kappa_k \leq \alpha_n S_g$. The same result is strongly expected to be true in the locally conformally flat case and generally for $N\leq 24$.

\medskip 
The aim of the paper is to investigate the effect of positive perturbations of the geometric potential by exhibiting the failure of compactness properties for the equation
\begin{equation}\label{Eq1}
\Delta_gu+(\alpha_N S_g+\varepsilon h) u=u^{2^*-1},\quad u>0\quad\text{in }M\,,
\end{equation}
where $h\in C^{0,\alpha}\(M\)$, $\alpha\in\(0,1\)$, with $\max_M h>0$ and $\varepsilon>0$ is a small parameter. 

\medskip 
A family $\(u_\varepsilon\)_\varepsilon$ of solutions to equation \eqref{Eq1} is said to {\it blow up} at some point $\xi_0\in M$ if there holds $\sup_U u_\varepsilon \to+\infty$ as $\varepsilon\to0$, for all neighborhoods $U$ of $\xi_0$ in $M$. 
Letting 
$$E(\xi):=\frac{h(\xi)}{\left|\Weyl_g(\xi)\right|_g}\,,$$
our main result is:

\begin{theorem}\label{ThTh}
Let $(M,g) \not=(\mathbb{S}^N,g_0)$ be a smooth, compact, non-locally conformally flat Riemannian manifold with $N\ge6$ and $\mu_g(M)>0$. Let $h\in C^{0,\alpha}\(M\)$, $\alpha\in\(0,1\)$, so that $\max_M h>0$ and $\inf\{|\Weyl_g(x)|_g\,:\,h(x)>0\}>0$. Then for $\varepsilon>0$ small, equation \eqref{Eq1} has a solution $u_\varepsilon$ such that the family $\(u_\varepsilon\)_\varepsilon$ blows up, up to a sub-sequence, as $\varepsilon \to 0$ at some point $\xi_0$ so that $E(\xi_0)=\max_M E$ . 
\end{theorem}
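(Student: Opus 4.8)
\subsection*{Proof proposal}

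The plan is to construct $u_\varepsilon$ by a Lyapunov--Schmidt finite-dimensional reduction, with an ansatz consisting of a single rescaled bubble. I would fix $\xi\in M$, work in conformal normal coordinates centered at $\xi$ following Lee--Parker, and let $W_{\delta,\xi}$ be the standard bubble
\begin{equation*}
U_\delta(x)=\big(N(N-2)\big)^{(N-2)/4}\Big(\frac{\delta}{\delta^2+|x|^2}\Big)^{(N-2)/2},
\end{equation*}
which solves the Yamabe equation on $\mathbb{R}^N$, transplanted to $M$ via these coordinates (multiplied by the corresponding conformal factor and cut off outside a fixed small geodesic ball around $\xi$). I would look for a solution of \eqref{Eq1} of the form $u=W_{\delta,\xi}+\phi$, with $\phi$ in the $H_1^2(M)$-orthogonal complement $K_{\delta,\xi}^\perp$ of the approximate kernel $K_{\delta,\xi}:=\Span\{\partial_\delta W_{\delta,\xi},\,\partial_{\xi_1}W_{\delta,\xi},\dots,\partial_{\xi_N}W_{\delta,\xi}\}$ of the linearized operator, and I would take the concentration parameter of the form $\delta=\varepsilon^{1/2}t$ when $N\ge7$ (and $\delta=(\varepsilon/|\log\varepsilon|)^{1/2}t$ when $N=6$), with $t$ ranging in a fixed compact subinterval of $(0,\infty)$ to be chosen later; this is the scaling that makes the linear perturbation $\varepsilon h$ balance the Weyl-curvature contribution in the energy.

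First I would solve the infinite-dimensional part of the problem. The linearized operator
\begin{equation*}
L_{\delta,\xi}\phi:=\Delta_g\phi+(\alpha_N S_g+\varepsilon h)\phi-(2^*-1)\,W_{\delta,\xi}^{2^*-2}\phi
\end{equation*}
is uniformly invertible on $K_{\delta,\xi}^\perp$ for $\varepsilon$ small and $(\delta,\xi)$ admissible, as a consequence of the coercivity of $\Delta_g+\alpha_N S_g$ (which uses $\mu_g(M)>0$), the nondegeneracy of the Euclidean bubble in the sense of Bianchi--Egnell, and the concentration of $W_{\delta,\xi}$. I would then estimate the error $R_{\delta,\xi}:=\Delta_g W_{\delta,\xi}+(\alpha_N S_g+\varepsilon h)W_{\delta,\xi}-W_{\delta,\xi}^{2^*-1}$, whose size in conformal normal coordinates is controlled by the curvature of $(M,g)$ at $\xi$ and by $\varepsilon h$, and is small in the relevant dual norm. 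A contraction-mapping argument would then produce, for every $\varepsilon$ small and every admissible $(\delta,\xi)$, a unique $\phi_{\delta,\xi}\in K_{\delta,\xi}^\perp$, depending in a $C^1$ way on $(\delta,\xi)$, such that $W_{\delta,\xi}+\phi_{\delta,\xi}$ solves \eqref{Eq1} modulo $K_{\delta,\xi}$, with $\|\phi_{\delta,\xi}\|$ small enough that its contribution to the energy does not affect the comparison of the two leading-order correction terms below (and $W_{\delta,\xi}+\phi_{\delta,\xi}>0$ by elliptic regularity and the maximum principle).

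Next I would pass to the reduced energy. With $I_\varepsilon(u):=\tfrac12\int_M(|\nabla u|_g^2+(\alpha_N S_g+\varepsilon h)u^2)\,dv_g-\tfrac1{2^*}\int_M|u|^{2^*}\,dv_g$ and $\mathcal{J}_\varepsilon(\delta,\xi):=I_\varepsilon(W_{\delta,\xi}+\phi_{\delta,\xi})$, the standard variational characterization of the reduction guarantees that critical points of $\mathcal{J}_\varepsilon$ on $(0,\infty)\times M$ correspond to solutions of \eqref{Eq1} of the above form. Using the Aubin--Lee--Parker conformal-normal-coordinates expansion of the geometric part of the energy, the elementary estimate $\int_M h\,W_{\delta,\xi}^2\,dv_g=b_N\,h(\xi)\,\delta^2+\o(\delta^2)$ with $b_N:=\int_{\mathbb{R}^N}U_1^2\,dx\in(0,\infty)$ (finite since $N\ge6$), and the control on $\phi_{\delta,\xi}$, I expect the expansion
\begin{equation*}
\mathcal{J}_\varepsilon(\delta,\xi)=c_0+\tfrac12\,b_N\,h(\xi)\,\varepsilon\,\delta^2-d_N\,|\Weyl_g(\xi)|_g^2\,\omega_N(\delta)+(\text{lower order}),
\end{equation*}
uniformly in the admissible range, where $c_0>0$ is the bubble energy, $d_N>0$, and $\omega_N(\delta)=\delta^4$ for $N\ge7$, $\omega_N(\delta)=\delta^4|\log\delta|$ for $N=6$. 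After the change of variable above this reads $\mathcal{J}_\varepsilon(\delta,\xi)=c_0+\sigma_\varepsilon\,\Phi(t,\xi)+\o(\sigma_\varepsilon)$ in $C^0$, uniformly for $t$ in the fixed compact interval and $\xi\in M$, where $\sigma_\varepsilon=\varepsilon^2$ (resp. $\varepsilon^2/|\log\varepsilon|$ when $N=6$) and $\Phi(t,\xi)=\tfrac12\,b_N\,h(\xi)\,t^2-d_N\,|\Weyl_g(\xi)|_g^2\,t^4$.

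Finally I would find a critical point of $\mathcal{J}_\varepsilon$ by maximization. For every $\xi$ with $h(\xi)>0$ — hence, by the hypothesis $\inf\{|\Weyl_g(x)|_g:h(x)>0\}>0$, with $|\Weyl_g(\xi)|_g>0$ — the map $t\mapsto\Phi(t,\xi)$ attains a strict maximum at $t_*(\xi)=\big(b_N\,h(\xi)/(4d_N\,|\Weyl_g(\xi)|_g^2)\big)^{1/2}$ with value $\tfrac{b_N^2}{16d_N}\,E(\xi)^2$; thus $\sup_{(t,\xi)}\Phi=\tfrac{b_N^2}{16d_N}\,(\max_M E)^2>0$, attained at $\big(t_*(\xi_0),\xi_0\big)$ for any maximizer $\xi_0$ of $E$, which necessarily satisfies $h(\xi_0)>0$. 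Choosing a compact interval $[1/R,R]\ni t_*(\xi_0)$ with $R$ so large that $\Phi<\tfrac12\sup\Phi$ on $\{1/R,R\}\times M$, for $\varepsilon$ small the maximum of $\mathcal{J}_\varepsilon$ over the corresponding compact set of parameters is attained at an interior point $(\delta_\varepsilon,\xi_\varepsilon)$, which is therefore a critical point; hence $u_\varepsilon:=W_{\delta_\varepsilon,\xi_\varepsilon}+\phi_{\delta_\varepsilon,\xi_\varepsilon}$ is a solution of \eqref{Eq1}. Since $\delta_\varepsilon\to0$ the family $(u_\varepsilon)_\varepsilon$ blows up, and along a subsequence $\xi_\varepsilon\to\xi_0$ with $E(\xi_0)=\max_M E$, since $\Phi(t_\varepsilon,\xi_\varepsilon)\to\sup\Phi$ forces $E(\xi_\varepsilon)\to\max_M E$. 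The hard part will be the energy expansion of the previous paragraph: one must isolate the Weyl term with the correct sign $d_N>0$, which is exactly where the non-locally-conformally-flat assumption and $N\ge6$ enter, through the Aubin/Lee--Parker computation in conformal normal coordinates, and one must make sure that neither $R_{\delta,\xi}$ nor $\phi_{\delta,\xi}$ contributes to the expansion at the order $\sigma_\varepsilon$ that carries the geometric information; the borderline dimension $N=6$, with its logarithm, demands extra care both in the expansion and in the precise choice of $\delta(\varepsilon)$.
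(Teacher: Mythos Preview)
Your proposal is correct and takes a genuinely different, though closely related, route from the paper. The paper works in \emph{geodesic} normal coordinates and does not use the bare bubble: its ansatz is $\mathcal{W}_{\mu,\xi}=\mathcal{U}_{\mu,\xi}+\mu^2\mathcal{V}_{\mu,\xi}$, where the correcting term $\mathcal{V}_{\mu,\xi}$ solves the linearized equation $\Delta V+pU^{p-1}V=\frac13\sum_{i,j} R_{ij}(\xi)\frac{y^iy^j}{|y|}\partial_rU+\alpha_N S_g(\xi)U$. Without this correction the error would be $O(\mu^2)$ in $L^{2N/(N+2)}$, giving $\|\phi\|^2=O(\mu^4)$, which is exactly the order of the Weyl term and would pollute the expansion; $\mathcal{V}_{\mu,\xi}$ pushes the error down to the size in Lemma~2.1 so that $\|\phi\|^2=o(\mu^4)$ (resp.\ $o(\mu^4|\ln\mu|)$ for $N=6$). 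Your use of conformal normal coordinates, with the conformal factor built into the ansatz, achieves the same cancellation geometrically: since $\det\tilde g\equiv1$ and $S_{\tilde g}(\xi)=0$, the radial Laplacian of the bubble agrees with the Euclidean one and the Ricci/scalar obstruction disappears, leaving an error of the same improved size. The paper in fact advertises exactly this alternative, attributing it to its companion~\cite{EspPisVet} and noting that it even extends to $N=4,5$ and the locally conformally flat case. For the endgame, the paper first proves the more general Theorem~1.2 about $C^0$--stable critical sets of $\widetilde E$ and then specializes to the set of maximizers of $E$; your direct interior-maximum argument on $[1/R,R]\times M$ is the concrete instance of that abstract step. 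What the paper's approach buys is a completely explicit computation with no $\xi$-dependent conformal change to track; what yours buys is a cleaner error estimate and a shorter energy expansion, at the cost of handling the smooth dependence on $\xi$ of the conformal factor.
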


Introducing the ``reduced energy" $\widetilde E:(0,\infty)\times M \to \mathbb{R}$ defined as 
$$\widetilde E(d,\xi)=c_2 d^2 h(\xi)-c_3 d^4 \left|\Weyl_g(\xi)\right|_g^2$$
with $c_2,c_3>0$, Theorem~\ref{ThTh} is an easy consequence of the following more general result:

\begin{theorem}\label{Th}
Let $(M,g) \not=(\mathbb{S}^N,g_0)$ be a smooth, compact, non-locally conformally flat Riemannian manifold with $N \ge6$ and $\mu_g(M)>0$, and $h\in C^{0,\alpha}\(M\)$, $\alpha\in\(0,1\)$. Assume that there exists a $C^0$--{\it stable critical set} $\mathcal{D} \subset (0,\infty) \times M$ of $\widetilde E$. Then for $\varepsilon>0$ small, equation \eqref{Eq1} has a solution $u_\varepsilon$ such that the family $\(u_\varepsilon\)_\varepsilon$ blows up, up to a sub-sequence, at some $\xi_0 \in \pi(\mathcal{D})$, where $\pi: (0,\infty)\times M \to M$ is the projection operator onto the second component. 
\end{theorem}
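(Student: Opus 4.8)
The plan is to use the Lyapunov–Schmidt finite-dimensional reduction. First I would fix the standard bubble profiles adapted to $(M,g)$: for $\delta>0$ and $\xi\in M$, set $U_{\delta,\xi}(x)=\delta^{-(N-2)/2}U\bigl(\exp_\xi^{-1}(x)/\delta\bigr)\chi(x)$, where $U(y)=\bigl(1+|y|^2/(N(N-2))\bigr)^{-(N-2)/2}$ is the standard Aubin–Talenti instanton solving the Euclidean Yamabe equation and $\chi$ is a fixed cutoff supported in a geodesic ball. Because the manifold is non-locally conformally flat with $N\ge6$, I would in fact work with the refined ansatz $W_{\delta,\xi}$ obtained by adding the first correction term that kills the leading Weyl-curvature obstruction (the term whose linearization involves the conformal Laplacian acting against $|\Weyl_g(\xi)|_g^2\,|y|^4$-type profiles), exactly as in the constructions of Aubin and of Micheletti–Pistoia–Vétois; this is what makes the error $R_{\delta,\xi}:=\Delta_g W_{\delta,\xi}+(\alpha_N S_g+\eps h)W_{\delta,\xi}-W_{\delta,\xi}^{2^*-1}$ small enough in the dual Sobolev norm. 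One then writes the sought solution as $u_\eps=W_{\delta,\xi}+\phi$ and splits the equation into its component in the space orthogonal to the kernel $K_{\delta,\xi}=\Span\{\partial_\delta W_{\delta,\xi},\partial_{\xi_i}W_{\delta,\xi}\}$ of the linearized operator and its component along $K_{\delta,\xi}$.

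The second step is the solution of the infinite-dimensional ("auxiliary") equation: using the standard coercivity of the linearized operator $L_{\delta,\xi}:=\Delta_g+(\alpha_N S_g+\eps h)-(2^*-1)W_{\delta,\xi}^{2^*-2}$ restricted to $K_{\delta,\xi}^\perp$ — which holds uniformly in $\delta,\xi,\eps$ because $W_{\delta,\xi}$ concentrates and the nondegeneracy of the Euclidean bubble is known — one applies a contraction-mapping/implicit-function argument to produce, for each small $\eps$ and each $(\delta,\xi)$ in a suitable compact range, a unique small $\phi=\phi_{\delta,\xi,\eps}\in K_{\delta,\xi}^\perp$ solving the projected-out equation, with the quantitative bound $\|\phi_{\delta,\xi,\eps}\|\le C\|R_{\delta,\xi}\|_*$ and with $C^1$ dependence on $(\delta,\xi)$. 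Substituting back, the full problem reduces to finding critical points of the finite-dimensional reduced energy $J_\eps(\delta,\xi):=\mathcal{E}_\eps\bigl(W_{\delta,\xi}+\phi_{\delta,\xi,\eps}\bigr)$, where $\mathcal{E}_\eps$ is the Euler functional of \eqref{Eq1}.

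The third step — the energy expansion — is the technical heart. Rescaling $\delta=\eps^{1/2}d$ (the natural balance given the shape of $\widetilde E$), a Taylor/asymptotic expansion using the refined ansatz and the $\phi$-bound yields
$$J_\eps\bigl(\eps^{1/2}d,\xi\bigr)=c_0+\eps^2\bigl(c_2\,d^2\,h(\xi)-c_3\,d^4\,|\Weyl_g(\xi)|_g^2\bigr)+\o(\eps^2)=c_0+\eps^2\,\widetilde E(d,\xi)+\o(\eps^2)\,,$$
uniformly in $C^0$ for $(d,\xi)$ in compact subsets of $(0,\infty)\times M$, and with a corresponding $C^1$-bound good enough to detect a stable critical set. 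Here the $d^2h$ term comes from integrating the perturbation $\eps h\,W_{\delta,\xi}^2$ against the bubble, and the $-d^4|\Weyl_g|^2$ term is the classical positive-mass-type / Weyl obstruction contribution of Aubin in the non-locally conformally flat case $N\ge6$; the constants $c_2,c_3>0$ are explicit integrals of $U$.

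Finally, since $\mathcal{D}\subset(0,\infty)\times M$ is a $C^0$-stable critical set of $\widetilde E$, the $C^0$-closeness (with $C^1$ control) of $\eps^{-2}(J_\eps(\eps^{1/2}\cdot,\cdot)-c_0)$ to $\widetilde E$ forces $J_\eps(\eps^{1/2}\cdot,\cdot)$ to have a critical point $(d_\eps,\xi_\eps)$ with $(d_\eps,\xi_\eps)\to\mathcal{D}$; by the reduction, $u_\eps=W_{\eps^{1/2}d_\eps,\xi_\eps}+\phi_{\eps^{1/2}d_\eps,\xi_\eps,\eps}$ is a genuine solution of \eqref{Eq1}, and since $\eps^{1/2}d_\eps\to0$ it blows up at $\xi_0=\lim\xi_\eps\in\pi(\mathcal{D})$. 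The main obstacle is Step 3: carrying out the energy expansion to second order in $\eps$ with enough precision to see both competing terms in $\widetilde E$, which requires the correctly chosen refined bubble (otherwise the Weyl term is contaminated by uncontrolled error) and careful handling of the $\phi$-remainder and of the $C^{0,\alpha}$ — rather than smooth — regularity of $h$; a secondary point requiring care is verifying the uniform invertibility of $L_{\delta,\xi}$ on $K_{\delta,\xi}^\perp$ in the regime $\delta\sim\eps^{1/2}\to0$.
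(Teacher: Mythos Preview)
Your proposal follows essentially the same Lyapunov--Schmidt scheme as the paper: improved bubble ansatz, uniform invertibility of the projected linearized operator, contraction to produce $\phi$, and reduction to a finite-dimensional energy that converges to $\widetilde E$ after rescaling. Two points deserve correction.

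First, the scaling $\delta=\eps^{1/2}d$ is only correct for $N\ge7$. In dimension $N=6$ the Weyl contribution to the energy expansion is $\mu^4\ln\mu$ rather than $\mu^4$ (see Proposition~\ref{Pr2}), so the balance $\eps\mu^2\sim\mu^4$ fails and your displayed expansion $J_\eps(\eps^{1/2}d,\xi)=c_0+\eps^2\widetilde E(d,\xi)+\o(\eps^2)$ is wrong as written. The paper handles this by setting $\mu(d)=d\,l^{-1}(\eps)$ with $l(\mu)=-\mu^2\ln\mu$ and normalizing the reduced energy by an extra factor $\ln l^{-1}(\eps)$; with that modification the limit $\widetilde E$ reappears. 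This is a genuine (though easily fixable) gap in your scheme for the borderline dimension covered by the theorem.

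Second, you do not need the ``corresponding $C^1$-bound'' you invoke. By the paper's definition of a $C^0$--stable critical set, it suffices that the rescaled reduced energy be $C^1$ (which follows from the $C^1$ dependence of $\phi_{\delta,\xi,\eps}$ on $(\delta,\xi)$) and converge to $\widetilde E$ in $C^0$ on compact sets; no $C^1$ convergence of the expansion is required, and indeed the paper does not establish one. Asking for $C^1$ control is harmless here but would be an unnecessary burden if you tried to carry it out.
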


According to Li~\cite{Li0}, we say that a compact set $\mathcal{D} \subset (0,\infty)\times M$ of critical points of $\widetilde E$ is a $C^0$--{\it stable critical set} of $\widetilde E$ if for any compact neighborhood $U$ of $\mathcal{D}$ in $(0,\infty) \times M$, there exists $\delta>0$ such that, if $\mathcal{J}\in C^1\(U\)$ and $\|\mathcal{J}-\widetilde E\|_{C^0(U)}\leq \delta$, then $\mathcal{J}$ has at least one critical point in $U$. 

\medskip 
Given $\xi \in M$ so that $h(\xi)>0$, define $d(\xi)$ as
$$d(\xi)=\left(\frac{c_2 h(\xi)}{2 c_3 \left|\Weyl_g\(\xi\)\right|_g^2}\right)^{1/2}$$
with the convention that $d(\xi)=+\infty$ if $\Weyl_g(\xi)=0$. Given $\xi \in M$ with $h(\xi)>0$, the function $\widetilde E$ is increasing for $d \in (0,d(\xi))$ and, if $d(\xi)<+\infty$, achieves its global maximum in $d$ at $d(\xi)$. Since 
$$\widetilde E(d(\xi),\xi)=\frac{c_2^2 h^2(\xi)}{4 c_3 \left|\Weyl_g\(\xi\)\right|_g^2}=\frac{c_2^2}{4 c_3} E(\xi)^2,$$
in order to derive Theorem~\ref{ThTh}, the set $\mathcal{D}$ in Theorem~\ref{Th} is constructed as 
$$\mathcal{D}=\{(d(\xi),\xi): \, \xi\in M\hbox{ s.t. } E(\xi)=\max_M E\},$$ 
which is clearly a $C^0$--{\it stable critical set} of $\widetilde E$. Since $d(\xi)$ is a maximum point of $\widetilde E$ in $d$, neither minimum points of $E$, nor saddle points of $E$ can provide any $C^0$--{\it stable critical set} of $\widetilde E$. 

\medskip 
Let us finally compare problem \eqref{Eq1} with its Euclidean counter-part on a smooth bounded domain $\Omega \subset \mathbb{R}^N$, $N\ge4$, with homogeneous Dirichlet boundary condition:
\begin{equation}\label{bn}
\Delta_{\Eucl} u+\lambda u=u^{2^*-1}  \ \hbox{in}\ \Omega,\quad u>0 \ \hbox{in}\ \Omega,\quad u=\ \hbox{on}\ \partial\Omega.
\end{equation}
For $\lambda \geq 0$, a direct minimization method (for the corresponding Rayleigh quotient) never gives rise to any solution of \eqref{bn}, and no solutions exist at all if $\Omega$ is star-shaped as shown by Poho{\v{z}}aev~\cite{poho}. Moreover, following the arguments developed by Ben Ayed--El Mehdi--Grossi--Rey~\cite{begr}, problem \eqref{bn} has never any solution with a single blow-up point as $\lambda \to 0^+$. The effect of the geometry, which is crucial to provide a solution for the Yamabe problem (corresponding to $\lambda=0$ in (\ref{bn})) by minimization, is also relevant to producing solutions of \eqref{Eq1} (corresponding to $\lambda \to 0^+$ in (\ref{bn})) with a single blow-up point as stated in Theorem~\ref{ThTh}.\\
When $\lambda<0$, solutions of \eqref{bn} can be found by direct minimization as shown by Brezis--Nirenberg~\cite{bn}, and exhibit a single blow-up point as $\lambda \to 0^-$ as shown by Han~\cite{h}, in contrast with the compactness property proved by Druet~\cites{Dru1,Dru2}. Solutions of \eqref{bn} with a single blow-up point, see Rey~\cites{Rey,rey2}, and with multiple blow-up points, see Bahri--Li--Rey~\cites{blr} and Musso--Pistoia~\cite{mupi4}, as $\lambda \to 0^-$ have been constructed in a very general way. 

\medskip 
We attack the existence issue of blowing-up solutions by a perturbative method, referred to in the literature as the non-linear Lyapunov--Schmidt reduction. Such a method is well known and the main point is to produce a suitable ansatz for the solutions. In the non-locally conformally flat case with $N\geq 6$ the basic ansatz is like in Aubin~\cite{Aub2}, but, see Section~\ref{Sec2}, needs to be slightly corrected via linearization so to account for the local geometry. A similar idea has been used for the prescribed $Q-$curvature problem by Pistoia--Vaira~\cite{piva}, the fourth-order analogue of the Yamabe problem. An alternative and more geometrical approach can be devised based on the conformal covariance of $\Delta_g+\alpha_N S_g$. The main point is to allow the metric $g$ to vary in the conformal class so to gain flatness at each point $\xi \in M$, and this approach allows us, see Esposito--Pistoia--V\'etois~\cite{EspPisVet}, to cover in an unified way also the remaining cases $N=4,5$ or $(M,g)$ locally conformally flat with $N\geq 6$ (the case $N=3$ is always excluded by the compactness result of Li--Zhu~\cite{LiZhu}). The aim of this paper is at the same time to advertise the general result contained in~\cite{EspPisVet},  and to provide a simpler and more intuitive proof in a special case. Thanks to the solvability theory of the linearized operator, we are led to study critical points of a finite-dimensional functional $\mathcal{J}_\varepsilon$, and a key step is to obtain in Section~\ref{Sec3} an asymptotic expansion of $\mathcal{J}_\varepsilon$ by identifying the ``reduced energy" $\widetilde E$ as the main order term. In Section~\ref{Sec4}, we describe the main steps of the non-linear Lyapunov-Schmidt reduction, and we deduce our general result Theorem~\ref{Th}.

\section{The correcting term towards an improved ansatz}\label{Sec2}

Letting 
\begin{equation}\label{defU}
U (r)= \({ \sqrt{N(N-2)} \over 1+r^2}\)^{N-2\over2},
\end{equation}
we aim to solve
\begin{equation}\label{equationV}
\Delta V+pU^{p-1} V=\frac{1}{3} \sum_{i,j=1}^N R_{ij}(\xi) \frac{y^i y^j}{|y|} \partial_r U+\alpha_N S_g(\xi) U\,,
\end{equation}
where $p=\frac{N+2}{N-2}$ and $R_{ij}$ are the components of the Ricci tensor $\Ric_g$ of $(M,g)$ in geodesic coordinates. Here, $\Delta=\displaystyle \sum_{i=1}^N \frac{\partial^2}{\partial y_i^2}$ is the Euclidean laplacian with the standard sign convention, and $U(|y|)$ is the unique positive radial solution of $-\Delta U=U^p$ with $U(0)=\max\limits_{\mathbb{R}^N} U=[N(N-2)]^{\frac{N-2}{4}}$.

\medskip 
Since $S_g(\xi)=\displaystyle \sum_{i=1}^N R_{ii}(\xi)$,  a straightforward computation shows that 
\begin{equation} \label{defV}
V(y)=[N(N-2)]^{\frac{N-2}{4}} \( \frac{|y|^2+3}{12(1+|y|^2)^{\frac{N}{2}}} \sum_{i,j=1}^N R_{ij}(\xi)  
 y^i y^j-\frac{S_g(\xi)}{24(N-1) }   \frac{|y|^4+3} {(1+|y|^2)^{\frac{N}{2}} }\)
\end{equation}
is a solution of \eqref{equationV} as we were searching for. 

\medskip  
Let $0<r_0<i_g(M)$, where $i_g(M)$ is the injectivity radius of $(M,g)$. Take $\chi$ a smooth cutoff function  such that $0\le\chi\le1$ in $\mathbb{R}$, $\chi\equiv1$ in $ \[-r_0/2,r_0/2 \]$, and $\chi\equiv0$ out of $ \[-r_0,r_0 \]$. For any point $\xi$ in $M$ and for any positive real number $\mu$, we define the functions $\mathcal{U}_{\mu,\xi}$ and  $\mathcal{V}_{\mu,\xi}$ on $M$ by
$$\mathcal{U}_{\mu,\xi} (z )=\chi \(d_g (z,\xi ) \)U_\mu\( d_g (z,\xi )\)\,,\:\:
\mathcal{V}_{\mu,\xi} (z )=\chi \(d_g (z,\xi ) \)V_\mu\( \exp_\xi^{-1}\(z\)\),$$
where $d_g$ is the geodesic distance in $(M,g)$ and $\exp_\xi^{-1}$ is the geodesic coordinate system. Here, $U_\mu$ and $V_\mu$ are defined as
$$U_{\mu }(x)=\mu^{-{N-2\over2}}U\({x \over\mu}\) \,,\:\:\: V_{\mu }(x)=\mu^{-{N-2\over2}}V\({x \over\mu}\),$$
obtained by scaling $U$ and $V$ in (\ref{defU}) and (\ref{defV}), respectively. Since $\mu_g(M)>0$ implies the coercivity of the conformal laplacian $\Delta_g+\alpha_N S_g$, let $i^*:L^{\frac{2N}{N+2}}\(M\)\rightarrow H^1_g(M)$ be the bounded operator defined as follows: the function $u=i^*(w)$ is the unique solution in $H^1_g\(M\)$ of the equation $\Delta_g u+ \alpha_N S_g u=w$ in $M$. Problem \eqref{Eq1} re-writes as
\begin{equation}\label{Eq1b}
u=i^*\[u_+^p -\eps   h u\]\,,\end{equation}
and we look for solutions of \eqref{Eq1b} in the form
\begin{equation}\label{Eq17}
u_\eps  (z)=\mathcal{W}_{\mu,\xi} (z ) +\phi_{\eps   }(z),\quad  \mathcal{W}_{\mu,\xi} =\mathcal{U}_{\mu,\xi}  +\mu^2\mathcal{V}_{\mu,\xi}\,,
\end{equation}
where $\xi\in M$, $\mu>0$ is small and $\phi_{\eps}$ is a small remainder term.
 
\medskip
First of all, we introduce the error term
\begin{equation}\label{rmx}
\mathcal{R}_{\mu,\xi}= \mathcal{W}_{\mu,\xi}  -i^*\[\(\mathcal{W}_{\mu,\xi}\)_+^p- \eps   h \mathcal{W}_{\mu,\xi} \]\,.
\end{equation}
We want to point out that the choice of the ansatz in \eqref{Eq17} with the extra term $\mathcal{V}_{\mu,\xi}$ is motivated by the need that the error term has to be small enough. Indeed, the error term is estimated as follows.
  
\begin{lemma}\label{Lem2}
Let $N\ge 6.$  There exists a positive constant $C_0>0$ such that for any $\mu$ small and $\xi$ in $M$ there holds
\begin{equation}\label{Lem2Eq1}
\left\|\mathcal{R}_{\mu,\xi}\right\|\le  C_0 \left\{\begin{array}{ll} \mu^{\frac{N-2}{2}}+\eps \mu^2 |\ln \mu|^{\frac{2}{3}} &\hbox{if } N=6\\
\mu^{\frac{N-2}{2}}+\eps \mu^2&\hbox{if } N=7\\
\mu^3 |\ln \mu|^{\frac{5}{8}}+\eps \mu^2&\hbox{if }N=8\\
\mu^3+\eps \mu^2&\hbox{if }N=9\\
\mu^{2\frac{N+2}{N-2}}+\eps \mu^2&\hbox{if }N\geq 10. \end{array} \right.
\end{equation}
\end{lemma}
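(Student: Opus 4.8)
The plan is to estimate $\left\|\mathcal{R}_{\mu,\xi}\right\| = \left\|\mathcal{W}_{\mu,\xi} - i^*\bigl[(\mathcal{W}_{\mu,\xi})_+^p - \eps h \mathcal{W}_{\mu,\xi}\bigr]\right\|$ by exploiting the fact that $i^*$ is bounded from $L^{2N/(N+2)}(M)$ to $H^1_g(M)$, so that
\begin{equation*}
\left\|\mathcal{R}_{\mu,\xi}\right\| \le C \left\|\Delta_g \mathcal{W}_{\mu,\xi} + \alpha_N S_g \mathcal{W}_{\mu,\xi} - (\mathcal{W}_{\mu,\xi})_+^p + \eps h \mathcal{W}_{\mu,\xi}\right\|_{L^{2N/(N+2)}(M)}.
\end{equation*}
The right-hand side then splits naturally into three contributions: the ``geometric error'' $\Delta_g \mathcal{W}_{\mu,\xi} + \alpha_N S_g \mathcal{W}_{\mu,\xi} - (\mathcal{W}_{\mu,\xi})_+^p$, which measures how far the corrected bubble $\mathcal{W}_{\mu,\xi} = \mathcal{U}_{\mu,\xi} + \mu^2 \mathcal{V}_{\mu,\xi}$ is from solving the unperturbed equation, and the ``linear perturbation error'' $\eps h \mathcal{W}_{\mu,\xi}$. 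The latter is the easy part: since $h \in C^{0,\alpha}(M)$ is bounded and $\mathcal{W}_{\mu,\xi}$ concentrates like a bubble, a direct change of variables $z = \exp_\xi(\mu y)$ gives $\left\|\eps h \mathcal{W}_{\mu,\xi}\right\|_{2N/(N+2)} \le C\eps \mu^2 \left\|U\right\|_{2N/(N+2)} + (\text{l.o.t.})$, which accounts for the $\eps\mu^2$ term appearing in every line of \eqref{Lem2Eq1}.

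The heart of the matter is the geometric error. First I would pass to geodesic normal coordinates centered at $\xi$, where the metric expands as $g_{ij}(y) = \delta_{ij} - \tfrac13 R_{iklj}(\xi) y^k y^l + O(|y|^3)$ and the Laplace--Beltrami operator expands accordingly; the conformal Laplacian $\Delta_g + \alpha_N S_g$ applied to a radial profile produces, at the relevant order, exactly the right-hand side of \eqref{equationV} — namely the term $\tfrac13 \sum R_{ij}(\xi) \tfrac{y^iy^j}{|y|}\partial_r U + \alpha_N S_g(\xi) U$ — plus higher-order remainders. This is precisely why $V$ in \eqref{defV} was chosen to solve \eqref{equationV}: adding $\mu^2 \mathcal{V}_{\mu,\xi}$ to the ansatz cancels this leading geometric obstruction. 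After this cancellation, one is left with (i) remainder terms from the metric expansion beyond second order in $y$ — these are of size $O(|y|^3)$ times bubble-type profiles and also $O(\mu^4)$ contributions from the $\mu^2 \mathcal{V}$ term interacting with the geometry; (ii) the nonlinear discrepancy $(\mathcal{U}_{\mu,\xi} + \mu^2\mathcal{V}_{\mu,\xi})_+^p - \mathcal{U}_{\mu,\xi}^p - p\mathcal{U}_{\mu,\xi}^{p-1}\mu^2\mathcal{V}_{\mu,\xi}$, which is quadratic in $\mu^2\mathcal{V}$ and hence of order $\mu^4 \mathcal{U}_{\mu,\xi}^{p-2}\mathcal{V}_{\mu,\xi}^2$; and (iii) the error introduced by the cutoff $\chi$, supported in the annulus $r_0/2 \le d_g(z,\xi) \le r_0$, which is exponentially (polynomially, in fact $O(\mu^{(N-2)/2})$) small.

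Then I would carefully compute the $L^{2N/(N+2)}$ norms of each of these pieces by the change of variables $y = x/\mu$ and dimensional analysis. The scaling $\mathcal{U}_{\mu,\xi} \sim \mu^{-(N-2)/2} U(x/\mu)$ means that each $|y|^3$-type remainder contributes $\mu^3$ up to a logarithmic or power factor coming from the behavior of $U$ at infinity — and this is exactly where the dimension-dependent thresholds arise: the integrals $\int_{\mathbb{R}^N}\bigl(|y|^3 (1+|y|)^{-(N+2)}\bigr)^{2N/(N+2)}dy$ and their analogues converge or pick up $|\ln\mu|$ factors depending on whether $N$ is below, at, or above the critical values $6, 8, 10$; the cutoff contribution $\mu^{(N-2)/2}$ dominates when $N=6$, whereas for $N \ge 10$ the nonlinear term $\mu^4 \mathcal{U}^{p-2}\mathcal{V}^2$ reorganizes into the $\mu^{2(N+2)/(N-2)}$ rate. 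The main obstacle is bookkeeping: one must track, term by term and dimension by dimension, which of the competing rates $\mu^{(N-2)/2}$, $\mu^3$ (with possible $|\ln\mu|$), $\mu^{2(N+2)/(N-2)}$, and $\eps\mu^2$ is largest, and confirm that the improved ansatz with $\mathcal{V}_{\mu,\xi}$ genuinely removes the $O(\mu^2)$ geometric error that the bare Aubin bubble would otherwise carry (which would be fatal, being of the same size as, or larger than, the perturbation term). No single estimate is hard, but organizing the case analysis cleanly — and verifying the exponents and logarithmic corrections in \eqref{Lem2Eq1} — is the delicate part.
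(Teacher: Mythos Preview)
Your plan is essentially the paper's own proof: reduce to the $L^{2N/(N+2)}$ norm via the boundedness of $i^*$, expand $\Delta_g$ in geodesic coordinates, use that $\mu^2 V_\mu$ solves the scaled version of \eqref{equationV} to cancel the leading $O(\mu^2)$ geometric error, and then estimate the three remaining pieces (higher-order metric remainders, the nonlinear discrepancy, and the $\eps h$ term) dimension by dimension.

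Two small technical points to tighten up. First, for $N\ge 7$ one has $p=\tfrac{N+2}{N-2}<2$, so the pointwise bound for the nonlinear discrepancy is not the second-order Taylor estimate $O\bigl(\mathcal{U}_{\mu,\xi}^{p-2}(\mu^2\mathcal{V}_{\mu,\xi})^2\bigr)$ (the factor $\mathcal{U}^{p-2}$ is unbounded), but rather $|(a+b)_+^p-a^p-pa^{p-1}b|=O(|b|^p)$; this is what yields $\bigl\|(\mu^2\mathcal{V}_{\mu,\xi})^p\bigr\|_{2N/(N+2)}=O\bigl(\mu^{2(N+2)/(N-2)}\bigr)$ cleanly. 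Second, your claim that the $\eps h\mathcal{W}_{\mu,\xi}$ term gives ``the $\eps\mu^2$ term appearing in every line'' misses the logarithmic correction at $N=6$: there $\|U\|_{L^{2N/(N+2)}(\mathbb{R}^N)}$ diverges, and integrating over $B_0(r_0/\mu)$ produces the extra factor $|\ln\mu|^{(N+2)/(2N)}=|\ln\mu|^{2/3}$. Neither of these changes the architecture of your argument.
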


\begin{proof}
It is enough to estimate the $L^{\frac{2N}{N+2}}$--norm of
$$\Delta_g \mathcal{W}_{\mu,\xi}+(\alpha_N S_g+\eps h)\mathcal{W}_{\mu,\xi}  - \(\mathcal{W}_{\mu,\xi}\)_+^p.$$
Since $\mathcal{U}_{\mu,\xi} \circ \exp_\xi$ is radially symmetric in $B_0(r_0)$, we have that
$$\Delta_g \mathcal{U}_{\mu,\xi} \(\exp_\xi x\)=-\Delta \( \mathcal{U}_{\mu,\xi} \circ \exp_\xi\) (x) -\frac{1}{2}\partial_r (\ln |g|) \partial_r \( \mathcal{U}_{\mu,\xi} \circ \exp_\xi\) (x),$$
where $|g|:=\hbox{det }g$. In geodesic coordinates, we have the Taylor expansion
\begin{equation} \label{Taylorg}
|g|=1-\frac{1}{3} \sum_{i,j=1}^N R_{ij}(\xi) x^i x^j+\operatorname{O}(|x|^3)
\end{equation}
(see for example Lee--Parker~\cite{LeePar}), yielding to
\begin{align}
&\Delta_g \mathcal{U}_{\mu,\xi} \(\exp_\xi x\)=-\chi(|x|) \Delta U_\mu (x) +\frac{\chi(|x|)}{3} \sum_{i,j=1}^N \frac{R_{ij}(\xi) x^i x^j}{|x|} \partial_r U_\mu(x) \nonumber\\
&\qquad\qquad\qquad\qquad+\operatorname{O}\(\mu^{\frac{N-2}{2}}+|x|^2 |\nabla U_\mu|\) \nonumber \\
&\qquad\qquad\quad= \mathcal{U}_{\mu,\xi}^p \(\exp_\xi x\) +\frac{\chi(|x|)}{3} \sum_{i,j=1}^N \frac{R_{ij}(\xi) x^i x^j}{|x|} \partial_r U_\mu(x)+\operatorname{O}\(\mu^{\frac{N-2}{2}}+|x|^2 |\nabla U_\mu|\) \label{formula1}
\end{align}
in view of $-\Delta U_\mu=U_\mu^p$. Similarly, we have that
$$\Delta_g \mathcal{V}_{\mu,\xi} \(\exp_\xi x\)=-\chi(|x|) \Delta V_\mu (x) +\operatorname{O}\(\mu^{\frac{N-6}{2}}+|x| |\nabla V_\mu|\).
$$
Since by \eqref{equationV} we have that
\begin{equation}\label{equationVbis}
\Delta (\mu^2 V_\mu)+pU_\mu^{p-1} (\mu^2 V_\mu)=\frac{1}{3} \sum_{i,j=1}^N R_{ij}(\xi) \frac{x^i x^j}{|x|} \partial_r U_\mu+\alpha_N S_g(\xi) U_\mu,
\end{equation}
by \eqref{formula1}--\eqref{equationVbis} we get that
\begin{eqnarray}
\|\Delta_g \mathcal{W}_{\mu,\xi}+\alpha_N S_g \mathcal{W}_{\mu,\xi}- \mathcal{U}_{\mu,\xi}^p-p\mu^2 \mathcal{U}_{\mu,\xi}^{p-1} \mathcal{V}_{\mu,\xi} \|_{L^{\frac{2N}{N+2}}(M)}
=\left\{\begin{array}{ll} \operatorname{O}\( \mu^{\frac{N-2}{2}} \) &\hbox{if } N=6,7\\
\operatorname{O}\(\mu^3 |\ln \mu|^{\frac{5}{8}}\) &\hbox{if }N=8\\
\operatorname{O}\( \mu^3 \) &\hbox{if }N\geq 9. \end{array} \right.
\label{formula3}
\end{eqnarray}
Since
$$\|h\mathcal{W}_{\mu,\xi}\|_{L^{\frac{2N}{N+2}}(M)}=\left\{\begin{array}{ll} 
\operatorname{O}\(\mu^2 |\ln \mu|^{\frac{2}{3}}\) &\hbox{if }N=6\\
\operatorname{O}\( \mu^2 \) &\hbox{if }N\geq 7 \end{array} \right.$$
and
$$\|\(\mathcal{W}_{\mu,\xi}\)_+^p - \mathcal{U}_{\mu,\xi}^p-p \ \mathcal{U}_{\mu,\xi}^{p-1} \(\mu^2 \mathcal{V}_{\mu,\xi}\) \|_{L^{\frac{2N}{N+2}}(M)}=\left\{\begin{array}{ll} 
\operatorname{O}\(\mu^4 |\ln \mu|^{\frac{2}{3}}\) &\hbox{if }N=6\\
\operatorname{O}\( \mu^{2\frac{N+2}{N-2}} \) &\hbox{if }N\geq 7 \end{array} \right.
$$
in view of $|(a+b)_+^p-a^p-p a^{p-1}b|=\operatorname{O}(|b|^p)$ for all $a>0$ and $b\in \mathbb{R}$,
by \eqref{formula3} we deduce the validity of \eqref{Lem2Eq1}.
\end{proof}

\section{The reduced energy}\label{Sec3}

Introduce the Euler-Lagrange functional $J_\eps  :{\rm H}^1_g(M)\to \mathbb{R}$ corresponding to equation \eqref{Eq1}:
$$J_\eps  (u):={1\over2}\int_M|\nabla u|_g^2 dv_g+{1\over2}\int_M\(\alpha_NS_g+\eps   h\) u ^2 dv_g-{1\over p+1} \int_M  u_+ ^{p+1} dv_g\,.$$
The aim is to find an asymptotic expansion of $J_\varepsilon\(\mathcal{W}_{\mu,\xi}\)$. We have that:

\begin{proposition}\label{Pr2} 
The following expansions do hold as $\epsilon,\,\mu  \to 0$:
\begin{eqnarray}\label{reduceden0}
J_\varepsilon\(\mathcal{W}_{\mu,\xi}\)
=\frac{K_6^{-6}}{6} +\frac{4}{5} \omega_5
 \left|\Weyl_g(\xi)\right|^2_g  \mu^4 \ln \mu+\frac{5}{24}K_6^{-6} h(\xi) \eps  \mu^2+\operatorname{o}\(\mu^4\ln \mu +\eps \mu^2 \) 
\end{eqnarray}   
when $N=6$, and
\begin{multline}\label{reduceden}
J_\varepsilon\(\mathcal{W}_{\mu,\xi}\)
=\frac{K_N^{-N}}{N} -
\frac{K_N^{-N}}{24N(N-4)(N-6)} \left|\Weyl_g(\xi)\right|^2_g  \mu^4+\frac{2(N-1)K_N^{-N}h(\xi)}{N(N-2)(N-4)}\eps  \mu^2\\
+\operatorname{o}\(\mu^4+\eps \mu^2 \) 
\end{multline}   
when $N\geq 7$, uniformly with respect to $\xi \in M$, where $K_N$ is the best constant for the embedding of $D^{1,2}\(\mathbb{R}^N\)$ into $L^{2^*}\(\mathbb{R}^N\)$.
\end{proposition}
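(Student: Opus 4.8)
The plan is to compute the three ingredients of $J_\eps(\mathcal{W}_{\mu,\xi})$ separately---the Dirichlet-plus-potential quadratic part, the $\eps h$ linear-in-$\eps$ part, and the nonlinear $L^{p+1}$ part---each expanded in powers of $\mu$ (with the borderline $\ln\mu$ factors in low dimensions), and then assemble them. First I would write $\mathcal{W}_{\mu,\xi}=\mathcal{U}_{\mu,\xi}+\mu^2\mathcal{V}_{\mu,\xi}$ and exploit that $\mathcal{U}_{\mu,\xi}$ and $\mu^2\mathcal{V}_{\mu,\xi}$ solve, modulo the cutoff and the metric Taylor error, the equations $-\Delta U_\mu=U_\mu^p$ and \eqref{equationVbis}. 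Integrating by parts, the leading term $\tfrac12\int|\nabla\mathcal{U}_{\mu,\xi}|_g^2-\tfrac1{p+1}\int\mathcal{U}_{\mu,\xi}^{p+1}$ produces the constant $K_N^{-N}/N$ via the standard identity $\int_{\mathbb{R}^N}U^{p+1}=\int_{\mathbb{R}^N}|\nabla U|^2=K_N^{-N}$ (here $K_6^{-6}$ when $N=6$), since $(\tfrac12-\tfrac1{p+1})\int U^{p+1}=\tfrac1N K_N^{-N}$. The cross terms $\int\langle\nabla\mathcal{U}_{\mu,\xi},\nabla(\mu^2\mathcal{V}_{\mu,\xi})\rangle_g$ and $\int\mathcal{U}_{\mu,\xi}^{p-1}\mathcal{V}_{\mu,\xi}$ combined with \eqref{equationV} rearrange, after a Pohozaev-type bookkeeping, into an integral over $\mathbb{R}^N$ of the expression in \eqref{defV} against $U$ and the Ricci/scalar source; this is where the Weyl term emerges, because the angular average of $R_{ij}y^iy^j R_{kl}y^ky^l$ against radial weights recombines $|\Ric_g|^2$ and $S_g^2$ into $|\Weyl_g|_g^2$ precisely in the non-locally-conformally-flat $N\ge6$ regime. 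For the $\eps$ term, $\tfrac\eps2\int h\,\mathcal{W}_{\mu,\xi}^2=\tfrac\eps2 h(\xi)\int_{\mathbb{R}^N}U_\mu^2+o(\eps\mu^2)$ by continuity of $h$, and $\int_{\mathbb{R}^N}U^2$ is finite for $N\ge5$ and equals a dimensional multiple of $K_N^{-N}$ after using the explicit form of $U$; for $N=6$ one has $\int_{B_\delta}U_\mu^2\sim\mu^2\int_{\mathbb{R}^6}U^2$ which is finite, but the next-order correction carries the $|\ln\mu|$ seen in Lemma~\ref{Lem2}, and a careful truncation argument pins the coefficient $\tfrac5{24}K_6^{-6}$.

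The order of steps I would follow: (i) localize everything to $B_\xi(r_0)$ via the cutoff, controlling the tail by $O(\mu^{N-2})$ which is negligible against $\mu^4$ for $N\ge7$ (and against $\mu^4\ln\mu$ for $N=6$, where $N-2=4$ but the tail is genuinely $O(\mu^4)$ without the log, hence $o(\mu^4\ln\mu)$); (ii) pull back by $\exp_\xi$ and insert the Taylor expansions $|g|=1-\tfrac13 R_{ij}x^ix^j+O(|x|^3)$ and $g^{ij}=\delta^{ij}+\tfrac13 R_{iklj}x^kx^l+O(|x|^3)$, so that $\int|\nabla\mathcal{W}_{\mu,\xi}|_g^2\,dv_g$ becomes a Euclidean integral with explicit curvature-weighted perturbations; (iii) use $-\Delta U_\mu=U_\mu^p$ and \eqref{equationVbis} to integrate by parts and cancel the would-be divergent $\mu^0$ and $\mu^2$ pieces against the nonlinear term, isolating the genuine $\mu^4$ (resp.\ $\mu^4\ln\mu$) coefficient; (iv) evaluate the resulting absolutely convergent (or logarithmically divergent, truncated) Euclidean integrals, reducing them to one-dimensional radial integrals of rational functions of $r$ times powers of $(1+r^2)$, which are computed via Beta-function identities; (v) perform the angular integration of the quadratic Ricci expression using $\fint_{\mathbb{S}^{N-1}}\theta^i\theta^j\theta^k\theta^l=\tfrac1{N(N+2)}(\delta^{ij}\delta^{kl}+\delta^{ik}\delta^{jl}+\delta^{il}\delta^{jk})$, then use the decomposition of the full curvature tensor to rewrite the surviving $|\Ric_g|_g^2$, $S_g^2$ combination as a multiple of $|\Weyl_g|_g^2$ (the first Pontryagin-type Gauss--Bonnet-free algebraic identity valid pointwise once the $R_{ikkj}$-term in $g^{ij}$ and the $R_{ij}$-term in $|g|$ are both accounted for); (vi) collect the constant, the $\mu^4$ (or $\mu^4\ln\mu$), and the $\eps\mu^2$ terms, and absorb all remainders---coming from $O(|x|^3)$ metric errors, from the cubic-in-$x$ terms in the Ricci expansion, from the $(\mathcal{W}_{\mu,\xi})_+^p-\mathcal{U}_{\mu,\xi}^p-p\mathcal{U}_{\mu,\xi}^{p-1}\mu^2\mathcal{V}_{\mu,\xi}=O(|\mu^2\mathcal{V}_{\mu,\xi}|^p)$ estimate, and from the oscillation of $h$---into $o(\mu^4+\eps\mu^2)$ (resp.\ $o(\mu^4\ln\mu+\eps\mu^2)$), exactly as quantified in Lemma~\ref{Lem2} and the accompanying elementary inequality $|(a+b)_+^p-a^p-pa^{p-1}b|=O(|b|^p)$.

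The uniformity in $\xi$ is automatic once each error bound is expressed through $\sup_M|\Rm_g|$, $\|h\|_{C^{0,\alpha}}$, and the injectivity radius, all of which are $\xi$-independent on the compact manifold $M$; I would simply note this at the end rather than carry it through every line.

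The main obstacle, as always in these Aubin-type expansions, is step (v): showing that the dimensional constants conspire so that the coefficient of $\mu^4$ is a \emph{negative} multiple of $|\Weyl_g(\xi)|_g^2$ (and, in $N=6$, that the $\mu^4$ term without the log vanishes and only the $\mu^4\ln\mu$ survives with a positive coefficient $\tfrac45\omega_5$, $\omega_5$ being the volume of $\mathbb{S}^5$). This requires both the correct normalization of $V$ in \eqref{defV}---which is why the ansatz was corrected in Section~\ref{Sec2}---and the precise algebraic recombination of the Ricci-squared and scalar-squared terms; getting the sign and the rational prefactor $-\tfrac1{24N(N-4)(N-6)}$ right (note the apparent singularity at $N=6$, which is precisely the source of the logarithm) is the delicate computational heart of the proof. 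Everything else is bookkeeping of convergent integrals and routine remainder estimates already licensed by Lemma~\ref{Lem2}.
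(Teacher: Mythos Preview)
Your overall architecture is fine and close to the paper's, but there is a genuine gap in step (v) that would make the computation fail: the coefficient of $\mu^4$ involving $|\Weyl_g(\xi)|_g^2$ cannot be extracted from the second-order metric expansion you wrote in step (ii). With only $|g|=1-\tfrac13 R_{ij}x^ix^j+O(|x|^3)$ and $g^{ij}=\delta^{ij}+\tfrac13 R_{iklj}x^kx^l+O(|x|^3)$, the curvature quantities that can appear at order $\mu^4$ are built from $\Ric_g$ and $S_g$ alone; the full Riemann norm $|\Rm_g|_g^2$ never shows up. Indeed, by the Gauss lemma $g^{ij}x^i=x^j$ in normal coordinates, so for the radial function $U_\mu$ the second-order $R_{iklj}$-term in $g^{ij}$ contributes nothing to $\int|\nabla U_\mu|_g^2\,dv_g$, and for the $V$-terms it lands at order $\mu^6$. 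Your claim that ``the surviving $|\Ric_g|_g^2$, $S_g^2$ combination rewrites as a multiple of $|\Weyl_g|_g^2$'' is algebraically impossible: by the orthogonal decomposition $|\Rm_g|_g^2=|\Weyl_g|_g^2+\tfrac{4}{N-2}|\E_g|_g^2+\tfrac{2}{N(N-1)}S_g^2$, the Weyl piece is independent of $\Ric_g$ and $S_g$.

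What the paper actually does is expand the spherical volume element to \emph{fourth} order, $\omega_{N-1}^{-1}r^{1-N}\int_{\partial B_\xi(r)}d\sigma_g=1-\tfrac{1}{6N}S_g(\xi)r^2+A_g(\xi)r^4+O(r^5)$, where $A_g(\xi)$ (see \eqref{AubinEq7}) contains $|\Rm_g(\xi)|_g^2$ explicitly; this is the sole source of $|\Weyl_g|_g^2$. The correction $\tfrac12\mu^4\int(\Delta V+pU^{p-1}V)V$ produces only $|\E_g|_g^2$ and $S_g^2$ terms (see \eqref{llp1}--\eqref{llp2}), and its role is precisely to cancel the $|\E_g|_g^2$ and $S_g^2$ contributions in the expansion of $J_\eps(\mathcal{U}_{\mu,\xi})$ coming from $A_g$ and $\varLambda_g$, leaving the pure Weyl term. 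So to fix your argument you must push the volume expansion to order $r^4$ in step (ii) and track $A_g(\xi)$ through the $\int|\nabla U_\mu|_g^2$ and $\int U_\mu^{2^*}$ computations; the rest of your outline then goes through.
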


\begin{proof}
First, we have that
\begin{eqnarray}\label{energy1.2}
&&J_\eps  \(\mathcal{U}_{\mu,\xi}+\mu^2\mathcal{V}_{\mu,\xi}\)-J_\eps  \(\mathcal{U}_{\mu,\xi} \)=
\mu^2\int_M  \[\<\nabla  \mathcal{U}_{\mu,\xi},\nabla \mathcal{V}_{\mu,\xi}\>_g+\(\alpha_NS_g+\eps   h\) \mathcal{U}_{\mu,\xi} \mathcal{V}_{\mu,\xi} - \mathcal{U}_{\mu,\xi}   ^{p } \mathcal{V}_{\mu,\xi}\] \nonumber\\ 
&&\quad  +{1\over2}\mu^4 \int_M \[\left|\nabla  \mathcal{V}_{\mu,\xi} \right|_g^2   -p  \mathcal{U}_{\mu,\xi}^{p -1}
   \mathcal{V}_{\mu,\xi}^2\]  dv_g+ {1\over2}\mu^4 \int_M \(\alpha_N S_g+\eps   h\) \  \mathcal{V}_{\mu,\xi} ^2 dv_g\nonumber\\  
&&\quad-{1\over p+1} \int_M  \[\left(\mathcal{U}_{\mu,\xi}+\mu^2\mathcal{V}_{\mu,\xi} \)_+ ^{p+1}-  \mathcal{U}_{\mu,\xi}^{p+1}-(p+1)\mathcal{U}_{\mu,\xi}^{p }
  \mu^2\mathcal{V}_{\mu,\xi}-{1\over2}p(p+1)\mathcal{U}_{\mu,\xi}^{p -1}
  \mu^4\mathcal{V}_{\mu,\xi}^2\]
 dv_g\nonumber\\
 &&=
\mu^2 \int_M \[\Delta_g  \mathcal{U}_{\mu,\xi}    + \alpha_N S_g  \mathcal{U}_{\mu,\xi}   - \mathcal{U}_{\mu,\xi}   ^{p }\] \mathcal{V}_{\mu,\xi}  dv_g
+{1\over2}\mu^4\int_M \[\Delta_g  \mathcal{V}_{\mu,\xi}       - p\mathcal{U}_{\mu,\xi}   ^{p -1} \mathcal{V}_{\mu,\xi} \] 
\mathcal{V}_{\mu,\xi}  dv_g \nonumber \\ 
&&\quad+\left\{
\begin{array}{ll}
\operatorname{o}\(\mu^4 \ln \mu\) &\hbox{if } N=6\\
\operatorname{o}\(\mu^4\) &\hbox{if } N\geq 7 
\end{array}\right. 
\end{eqnarray}
as $\mu \to 0$, in view of
\begin{align*}
& \int_M  \left|\left(\mathcal{U}_{\mu,\xi}+\mu^2\mathcal{V}_{\mu,\xi} \)_+ ^{p+1}-  \mathcal{U}_{\mu,\xi}^{p+1}-(p+1)\mathcal{U}_{\mu,\xi}^{p }
  \mu^2\mathcal{V}_{\mu,\xi}-{1\over2}p(p+1)\mathcal{U}_{\mu,\xi}^{p -1}
  \mu^4\mathcal{V}_{\mu,\xi}^2\right|
 dv_g\nonumber\\
 &\qquad=  \operatorname{O}\( \mu^{\frac{4N}{N-2}} \int_M \left|\mathcal{V}_{\mu,\xi} \right| ^{\frac{2N}{N-2}}dv_g\)
 =\operatorname{o}\(\mu^4\)
 \end{align*}
and $\int_M  \mathcal{V}_{\mu,\xi} ^2 dv_g= \left\{
\begin{array}{ll}
\operatorname{O}\(1\) &\hbox{if } N=6\\
\operatorname{o}\(1\) &\hbox{if } N\geq 7 
\end{array}\right.$ as $\mu \to 0$. Now, observe that there holds 
\begin{eqnarray*}
&& \mu^2 \int_M \[\Delta_g  \mathcal{U}_{\mu,\xi}    + \alpha_N S_g  \mathcal{U}_{\mu,\xi}   - \mathcal{U}_{\mu,\xi}   ^{p }\] \mathcal{V}_{\mu,\xi}  dv_g
+\mu^4 \int_M \[\Delta_g  \mathcal{V}_{\mu,\xi}       - p\mathcal{U}_{\mu,\xi}   ^{p -1} \mathcal{V}_{\mu,\xi} \] 
\mathcal{V}_{\mu,\xi}  dv_g \\
&&\qquad=\mu^2 \int_M \[\Delta_g  \mathcal{W}_{\mu,\xi}    + \alpha_N S_g  \mathcal{W}_{\mu,\xi}   -\mathcal{U}_{\mu,\xi}^p-p\mu^2 \mathcal{U}_{\mu,\xi}^{p-1} \mathcal{V}_{\mu,\xi}\] \mathcal{V}_{\mu,\xi}  dv_g +\operatorname{O}\( \mu^4 \int_M  \mathcal{V}_{\mu,\xi} ^2dv_g\) \nonumber\\
&&\qquad=\left\{
\begin{array}{ll}
\operatorname{o}\(\mu^4 \ln \mu\) &\hbox{if } N=6\\
\operatorname{o}\(\mu^4\) &\hbox{if } N\geq 7 
\end{array}\right. \nonumber
\end{eqnarray*}
as $\mu \to 0$, in view of \eqref{formula3}. By (\ref{Taylorg}) and 
\begin{align*}
\Delta_g  \mathcal{V}_{\mu,\xi}(\exp_\xi x)&=-\Delta\(\mathcal{V}_{\mu,\xi}\circ \exp_\xi \)(x)\\
&\quad+\operatorname{O}\(|x|\left|\nabla\(\mathcal{V}_{\mu,\xi}\circ\exp_\xi\)\(x\)\right|+|x|^2\left|\nabla^2\(\mathcal{V}_{\mu,\xi}\circ\exp_\xi\)\(x\)\right|\)
\end{align*}
we deduce that
\begin{equation}\label{added1}
\int_M \[\Delta_g  \mathcal{V}_{\mu,\xi}-p\mathcal{U}_{\mu,\xi}^{p -1}\mathcal{V}_{\mu,\xi} \] 
\mathcal{V}_{\mu,\xi}dv_g=
-\int_{B_0(\frac{r_0}{2 \mu})} \(\Delta V+p U^{p -1} V\) Vdy+\left\{
\begin{array}{ll}
\operatorname{O}\(1\) &\hbox{if } N=6\\
\operatorname{o}\(1\) &\hbox{if } N\geq 7 
\end{array}\right.
\end{equation}
as $\mu \to 0$. By (\ref{energy1.2}) and \eqref{added1}, we get that
\begin{equation}\label{tttt}
J_\eps\(\mathcal{U}_{\mu,\xi}+\mu^2\mathcal{V}_{\mu,\xi}\)=J_\eps  \(\mathcal{U}_{\mu,\xi} \)
+{1\over2}\mu^4 \int_{B_0(\frac{r_0}{2 \mu})} \(\Delta V+p U^{p -1} V\) Vdy +\left\{
\begin{array}{ll}
\operatorname{o}\(\mu^4 \ln \mu\) &\hbox{if } N=6\\
\operatorname{o}\(\mu^4\) &\hbox{if } N\geq 7 
\end{array}\right. 
\end{equation}
as $\mu \to 0$. By (\ref{equationV})--(\ref{defV}) and easy symmetry properties we deduce that
\begin{align}\label{additionalterm}
& \int_{B_0(\frac{r_0}{2 \mu})} \(\Delta V+p U^{p -1} V\) Vdy \nonumber \\
&\quad= -\frac{[N(N-2)]^{\frac{N-2}{2}} (N-2)}{36} \int_{B_0(\frac{r_0}{2 \mu})} \(\sum_{i,j=1}^N R_{ij}(\xi) y^i y^j\)^2   \frac{|y|^2+3}{(1+|y|^2)^N}dy \nonumber \\
&+\frac{[N(N-2)]^{\frac{N-2}{2}} \alpha_N}{72 N(N-1)} S_g^2(\xi) \int_{B_0(\frac{r_0}{2 \mu})}   \frac{(7N-10)|y|^6+3(7N-8)|y|^4+3(7N-10)|y|^2-9N} {(1+|y|^2)^N} dy \nonumber\\
&\quad= -\frac{[N(N-2)]^{\frac{N-2}{2}} (N-2)}{36 } \int_{B_0(\frac{r_0}{2 \mu})} \sum_{i,j,k,s=1}^N E_{ij}(\xi) E_{ks}(\xi) y^i y^j  y^k y^s    \frac{|y|^2+3}{(1+|y|^2)^N}dy \nonumber \\
&\qquad-\omega_{N-1} \frac{[N(N-2)]^{\frac{N-2}{2}} (N-2)}{576 N^2(N-1)^2} S_g^2(\xi) \left[(N-2)(N-4)I_N^{\frac{N+4}{2}}+3(N^2-8N+8) I_N^{\frac{N+2}{2}} \right. \nonumber \\
&\qquad\left.-3N(7N-10)I_N^{\frac{N}{2}}+9N^2 I_N^{\frac{N-2}{2}}\right]+\operatorname{o}(1) 
\end{align}
as $\mu \to 0$, where the $E_{ij}$'s are the components of the traceless part $\E_g=\Ric_g-\frac{S_g}{N}g$ of the Ricci curvature $\Ric_g$ of $(M,g)$ in geodesic coordinates and 
$$I^q_p=\left\{\begin{aligned} & \int_0^{+\infty}\frac{r^q}{\(1+r\)^p}dr &\hbox{if }p-q>1\\
&\int_0^{\frac{r_0^2}{4 \mu^2}}\frac{r^q}{\(1+r\)^p}dr &\hbox{if }p-q\leq 1.
\end{aligned} \right. $$
Since integration by parts yields to 
\begin{equation}\label{AubinEq11}
I^q_{p+1}=\frac{p-q-1}{p}I^q_p\quad\text{and}\quad I^{q+1}_{p+1}=\frac{q+1}{p-q-1}I^q_{p+1}
\end{equation}
as soon as $p-q>1$, we have that
\begin{equation}\label{llp}
I_N^{\frac{N}{2}}=\frac{N}{N-2}I_N^{\frac{N-2}{2}}=\frac{N-4}{N+2}I_N^{\frac{N+2}{2}}\quad\text{and}\quad I_N^{\frac{N+4}{2}}=\left\{\begin{array}{ll}-2 \ln \mu+\operatorname{O}(1) &\hbox{if }N=6\\
\frac{(N+2)(N+4)}{(N-4)(N-6)}I_N^{\frac{N}{2}}& \hbox{if }N\geq 7
\end{array}\right.
\end{equation}
as $\mu \to 0$, and it can be easily checked that
\begin{equation}\label{AubinEq16}
I_N^{\frac{N}{2}}=\frac{N\omega_N}{2^{N-1}\(N-2\)\omega_{N-1}}=\frac{2K_N^{-N}}{ [N(N-2)]^{\frac{N-2}{2}} (N-2)^2\omega_{N-1}}
\end{equation}
(see Aubin~\cite{Aub2}). Since for all $i \not=j$ there holds 
$$\int_{S^{N-1}} (y^i)^4 d v_{g_0}=3 \int_{S^{N-1}} (y^i)^2(y^j)^2 d v_{g_0}= \frac{3}{N(N+2)}\int_{S^{N-1}} |y|^4 d v_{g_0}\,,$$
by \eqref{additionalterm} and \eqref{llp}--\eqref{AubinEq16} we deduce that
\begin{eqnarray}\label{llp1}
\int_{B_0(\frac{r_0}{2 \mu})} \(\Delta V+p U^{p -1} V\) Vdy =\frac{8}{3 } \omega_5 |E_g(\xi)|_g^2  \ln \mu +
 \frac{16}{225} \omega_5  S_g^2(\xi)  \ln \mu +\operatorname{O}(1)
\end{eqnarray}
if $N=6$, and
\begin{multline} \label{llp2}
\int_{B_0(\frac{r_0}{2 \mu})} \(\Delta V+p U^{p -1} V\) Vdy = -\frac{2N-7}{9 N(N-2)(N-4)(N-6) } K_N^{-N} |E_g(\xi)|_g^2\\
+ \frac{(N-2)(N-7)}{36 N^2(N-1)(N-4)(N-6)} K_N^{-N}S_g^2(\xi) +\operatorname{o}(1) 
\end{multline}
if $N\geq 7$. Inserting \eqref{llp1}--\eqref{llp2} into \eqref{tttt}, by Lemma~\ref{energy2} below we deduce the validity of \eqref{reduceden0}--\eqref{reduceden}.
\end{proof}

We are left with proving the following:

\begin{lemma}\label{energy2}
The following expansions do hold as $\epsilon,\,\mu  \to 0$:
\begin{align*}
J_\eps  \(U_{\mu,\xi}\)&= \frac{K_6^{-6}}{6}+\[\frac{4}{5}
\left|\Weyl_g\(\xi\)\right|^2_g -
\frac{4}{3 } |E_g(\xi)|_g^2  - \frac{8}{225} S_g^2(\xi) \] \omega_5 \mu^4 \ln \mu+\frac{5}{24}K_6^{-6} h(\xi) \eps  \mu^2  \\ 
&\quad +\operatorname{o}\(\mu^4 \ln \mu +\eps \mu^2 \)
\end{align*}
when $N=6$, and
\begin{align*}
J_\eps  \(U_{\mu,\xi}\)&=\frac{K_N^{-N}}{N}+\[-\frac{K_N^{-N}}{24N(N-4)(N-6)} \left|\Weyl_g(\xi)\right|^2_g\right.\nonumber\\ 
&\quad\left.+\frac{(2N-7)K_N^{-N}}{18N(N-2)(N-4)(N-6)} \left|\E_g(\xi)\right|^2_g -\frac{(N-2)(N-7)K_N^{-N}}{72N^2(N-1)(N-4)(N-6)}
S_g(\xi) ^2 \] \mu^4\nonumber\\ 
&\quad+\frac{2(N-1)K_N^{-N}}{N(N-2)(N-4)}h(\xi)\eps  \mu^2+\operatorname{o}\(\mu^4+\eps \mu^2 \)
\end{align*}
when $N\geq 7$, uniformly with respect to $\xi \in M$.
\end{lemma}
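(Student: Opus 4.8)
The plan is to write $J_\eps\(\mathcal{U}_{\mu,\xi}\)=J_0\(\mathcal{U}_{\mu,\xi}\)+\frac{\eps}{2}\int_M h\,\mathcal{U}_{\mu,\xi}^2\,dv_g$ (the positive part being irrelevant since $\mathcal{U}_{\mu,\xi}\ge0$) and to expand the two summands separately.

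For the perturbation term I would work in geodesic normal coordinates at $\xi$, where $\mathcal{U}_{\mu,\xi}\(\exp_\xi x\)=\mu^{-\frac{N-2}{2}}\chi\(|x|\)U\(|x|/\mu\)$, rescale $x=\mu y$, and use $dv_g=\(1+\operatorname{O}\(\mu^2|y|^2\)\)dy$ coming from \eqref{Taylorg} together with the Hölder bound $h\(\exp_\xi\(\mu y\)\)=h\(\xi\)+\operatorname{O}\(\mu^\alpha|y|^\alpha\)$. Since $U^2\(r\)\sim r^{2-2N}$ at infinity, $\int_{\mathbb{R}^N}U^2<+\infty$ precisely because $N\ge5$, and the cut-off $\chi$ only produces an $\operatorname{O}\(\mu^{N-2}\)$ error; this yields $\int_M h\,\mathcal{U}_{\mu,\xi}^2\,dv_g=\mu^2 h\(\xi\)\int_{\mathbb{R}^N}U^2\,dy+\operatorname{o}\(\mu^2\)$, uniformly in $\xi$. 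Computing $\int_{\mathbb{R}^N}U^2=\frac12[N(N-2)]^{\frac{N-2}{2}}\omega_{N-1}I_{N-2}^{\frac{N-2}{2}}$, iterating \eqref{AubinEq11} to bring this to $I_N^{\frac{N}{2}}$, and then using \eqref{AubinEq16}, one finds $\int_{\mathbb{R}^N}U^2=\frac{4(N-1)}{N(N-2)(N-4)}K_N^{-N}$, whence the announced coefficient $\frac{2(N-1)}{N(N-2)(N-4)}K_N^{-N}$ of $h\(\xi\)\eps\mu^2$ (which equals $\frac{5}{24}K_6^{-6}$ at $N=6$).

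For $J_0\(\mathcal{U}_{\mu,\xi}\)$ I would follow Aubin's classical test-function computation. Writing $J_0\(\mathcal{U}_{\mu,\xi}\)=\frac12\int_M\(|\nabla\mathcal{U}_{\mu,\xi}|_g^2+\alpha_N S_g\mathcal{U}_{\mu,\xi}^2\)dv_g-\frac1{p+1}\int_M\mathcal{U}_{\mu,\xi}^{p+1}\,dv_g$, each integral is expanded in geodesic normal coordinates by means of the Taylor expansions of $g^{ij}$ and $|g|$ (cf. \eqref{Taylorg} and Lee--Parker~\cite{LeePar}), the radial symmetry of $U$, the sphere identities $\int_{S^{N-1}}\(y^i\)^4=3\int_{S^{N-1}}\(y^i\)^2\(y^j\)^2=\frac{3}{N(N+2)}\int_{S^{N-1}}|y|^4$ for $i\ne j$, and the recursions \eqref{AubinEq11}--\eqref{AubinEq16}. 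Since $\int_{\mathbb{R}^N}|\nabla U|^2=\int_{\mathbb{R}^N}U^{p+1}=K_N^{-N}$ and $\frac12-\frac1{p+1}=\frac1N$, the leading term is $\frac{K_N^{-N}}{N}$; the order-$\mu^2$ contributions cancel (via the Pohozaev-type identity $\int_{\mathbb{R}^N}U^{p+1}|y|^2=\frac{N^2(N-4)}{4(N-1)}\int_{\mathbb{R}^N}U^2$, a manifestation of the conformal covariance of $\Delta_g+\alpha_N S_g$), so the first correction is of order $\mu^4$ — of order $\mu^4\ln\mu$ when $N=6$, because there the integral $I_N^{\frac{N+4}{2}}$ is only logarithmically truncated, $I_6^{5}=-2\ln\mu+\operatorname{O}(1)$ by \eqref{llp}. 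The resulting $\mu^4$-coefficient is a quadratic expression in the curvature of $(M,g)$ at $\xi$ whose natural building blocks are $|\Rm_g\(\xi\)|_g^2$ (from the $g^{ij}$-expansion) and $|\Ric_g\(\xi\)|_g^2$, $S_g\(\xi\)^2$ (from $|g|^{1/2}$ and from $\int_M S_g\mathcal{U}_{\mu,\xi}^2$); substituting $|\Ric_g|^2=|\E_g|^2+S_g^2/N$ and the Weyl decomposition $|\Rm_g|^2=|\Weyl_g|^2+\frac{4}{N-2}|\E_g|^2+\frac{2}{N(N-1)}S_g^2$ recasts it in exactly the stated form. Combining the two pieces, and checking that all remainders are genuinely $\operatorname{o}\(\mu^4\ln\mu\)$ for $N=6$ and $\operatorname{o}\(\mu^4\)$ for $N\ge7$, then gives the two displayed expansions, the uniformity in $\xi$ being guaranteed by the compactness of $M$ (uniform bounds on the curvature and its derivatives, $i_g(M)>0$, and a uniform Hölder bound on $h$).

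The one \emph{genuinely delicate point} is the bookkeeping in the Aubin expansion: one must retain every $\operatorname{O}\(|x|^2\)$ and $\operatorname{O}\(|x|^3\)$ term coming from the metric and from $\chi$, verify that the cut-off contributes only high powers of $\mu$, and, in dimensions $N=6$ and $N=7$, handle with care the integrals sitting exactly at the convergence/truncation threshold. Once Lemma~\ref{energy2} is established, Proposition~\ref{Pr2} follows at once: inserting these expansions into \eqref{tttt}, the $|\E_g\(\xi\)|_g^2$ and $S_g\(\xi\)^2$ terms cancel exactly against those in \eqref{llp1}--\eqref{llp2}, leaving only the $|\Weyl_g\(\xi\)|_g^2$ contribution.
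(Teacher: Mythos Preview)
Your proposal is correct and follows the same Aubin-style expansion as the paper, which organizes the computation via the spherical-average formulae \eqref{AubinEq3}--\eqref{AubinEq5} (packaging the angular integrations into the quantities $\varLambda_g(\xi)$ and $A_g(\xi)$ of \eqref{AubinEq6}--\eqref{AubinEq7}) rather than by a direct Taylor expansion in normal coordinates. One small remark on your attribution of terms: by the Gauss lemma $g^{ij}x_j=x^i$ in geodesic coordinates, so for the radial function $\mathcal{U}_{\mu,\xi}$ the inverse metric contributes nothing to $\int_M|\nabla\mathcal{U}_{\mu,\xi}|_g^2\,dv_g$, and the $|\Rm_g(\xi)|_g^2$ term in fact arises solely from the fourth-order expansion of $\sqrt{|g|}$ encoded in $A_g(\xi)$.
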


\begin{proof}
There hold
\begin{align}
&\frac{1}{\omega_{N-1}r^{N-1}}\int_{\partial B_\xi\(r\)}hd\sigma_g=h\(\xi\)+\operatorname{O}\(r \),\label{AubinEq3}\\
&\frac{1}{\omega_{N-1}r^{N-1}}\int_{\partial B_\xi\(r\)}S_g d\sigma_g=S_g \(\xi\)-\frac{1}{2N}\varLambda_g\(\xi\)r^2+\operatorname{O}\(r^4\),\label{AubinEq4}\\
&\frac{1}{\omega_{N-1}r^{N-1}}\int_{\partial B_\xi\(r\)}d\sigma_g=1-\frac{1}{6N}S_g\(\xi\)r^2+A_g\(\xi\)r^4+\operatorname{O}\(r^5\),\label{AubinEq5}
\end{align}
as $r\to0$, uniformly with respect to $\xi$, where $d\sigma_g$ is the volume element of $\partial B_\xi\(r\)$, $\omega_{N-1}$ is the volume of the unit $\(N-1\)$--sphere, and where (see \eqref{AubinEq8}--\eqref{AubinEq9})
 \begin{equation}\label{AubinEq6}
\varLambda_g\(\xi\)=\varDelta_gS_g\(\xi\)+\frac{1}{3}S_g\(\xi\)^2
\end{equation}
and
\begin{equation}\label{AubinEq7}
A_g\(\xi\)=\frac{18\varDelta_gS_g\(\xi\)+8\left|\Ric_g\(\xi\)\right|_g^2-3\left|\Rm_g\(\xi\)\right|_g^2+5S_g\(\xi\)^2}{360N\(N+2\)}\,.
\end{equation}
The orthogonal decomposition of Riemann curvature is given by
\begin{equation}\label{AubinEq8}
\left|\Rm_g\(\xi\)\right|_g^2=\left|\Weyl_g\(\xi\)\right|_g^2+\frac{4}{N-2}\left|\E_g\(\xi\)\right|_g^2+\frac{2}{N\(N-1\)}S_g\(\xi\)^2,
\end{equation}
where $\Weyl_g$ is the Weyl curvature of $g$ and $\E_g=\Ric_g-\frac{S_g}{N} g$ is the traceless part of the Ricci curvature of $g$. Moreover, we get
\begin{equation}\label{AubinEq9}
\left|\Ric_g\(\xi\)\right|_g^2=\left|\E_g\(\xi\)\right|_g^2+\frac{1}{N}S_g\(\xi\)^2.
\end{equation}
By \eqref{llp} and \eqref{AubinEq5}, we compute
\begin{align} \label{AubinEq12}
&\int_M\left|\nabla U_{\mu,\xi}\right|_g^2dv_g=[N(N-2)]^{\frac{N-2}{2}} (N-2)^2\int_0^{\frac{r_0}{2}}\frac{\mu^{N-2}r^2}{\(\mu^2+r^2\)^N}\int_{\partial B_\xi\(r\)}d\sigma_gdr+\operatorname{O}\(\mu^{N-2}\)\\
&\quad=[N(N-2)]^{\frac{N-2}{2}} (N-2)^2 \omega_{N-1}\nonumber\\
&\qquad\times\int_0^{\frac{r_0}{2\mu}}\frac{r^{N+1}}{\(1+r^2\)^N}\(1-\frac{1}{6N}S_g\(\xi\) \mu^2 r^2+A_g\(\xi\) \mu^4 r^4+\operatorname{O}\(\mu^5 r^5\)\)dr+\operatorname{O}\(\mu^{N-2}\)\nonumber\\
&\quad= \frac{ [N(N-2)]^{\frac{N-2}{2}} (N-2)^2}{2}\omega_{N-1}\nonumber\\
&\qquad\times\(I_N^{\frac{N}{2}}-\frac{1}{6N}I_N^{\frac{N+2}{2}}S_g \(\xi\)\mu^2+
I_N^{\frac{N+4}{2}}A_g\(\xi\)\mu^4+\operatorname{O}\(I_N^{\frac{N+5}{2}} \mu^5+\mu^{N-2}\)\) \nonumber\\
&\quad=\left\{ \begin{aligned}
&K_N^{-N} \(1-\frac{N+2}{6N\(N-4\)}S_g \(\xi\)\mu^2\)
-  9216\ \omega_5 A_g\(\xi\)\mu^4 \ln \mu  +\operatorname{O}\(\mu^4\) &\hbox{if } N=6\\
&K_N^{-N} \(1-\frac{N+2}{6N\(N-4\)}S_g \(\xi\)\mu^2+\frac{\(N+2\)\(N+4\)}{\(N-4\)
\(N-6\)} A_g\(\xi\)\mu^4\)  +\operatorname{O}\(\mu^5\)& \hbox{if }N\geq 7
\end{aligned}\right. \nonumber
\end{align}
in view of \eqref{AubinEq16}. Since by \eqref{AubinEq11} there hold
$$I_{N-2}^{\frac{N-2}{2}}=\frac{4(N-1)(N-2)}{N(N-4)}I_{N}^{\frac{N}{2}}\quad\text{and}\quad
I_{N-2}^{\frac{N}{2}}=\left\{\begin{array}{ll}-2 \ln \mu+\operatorname{O}(1) &\hbox{if }N=6\\
\frac{4(N-1)(N-2)}{(N-4)(N-6)}I_N^{\frac{N}{2}}& \hbox{if }N\geq 7
\end{array}\right.$$
as $\mu \to 0$, by \eqref{AubinEq4} we compute
\begin{align}
&\int_M S_g U_{\mu,\xi}^2dv_g= [N(N-2)]^{\frac{N-2}{2}} \int_0^{\frac{r_0}{2}}\frac{\mu^{N-2}}{\(\mu^2+r^2\)^{N-2}}\int_{\partial B_\xi\(r\)}S_g d\sigma_gdr+\operatorname{O}\(\mu^{N-2}\)\nonumber\\
&\quad=[N(N-2)]^{\frac{N-2}{2}} \omega_{N-1}\mu^2\int_0^{\frac{r_0}{2\mu}}\frac{r^{N-1}}{\(1+r^2\)^{N-2}}\(S_g \(\xi\)-\frac{1}{2N}\varLambda_g\(\xi\)
\mu^2r^2+\operatorname{O}\(\mu^4r^4\)\)dr\nonumber\\
&\qquad+\operatorname{O}\(\mu^{N-2}\)\nonumber\\
&\quad=\frac{[N(N-2)]^{\frac{N-2}{2}}}{2}\omega_{N-1}\mu^2\(I_{N-2}^{\frac{N-2}{2}}S_g\(\xi\)
-\frac{1}{2N}I_{N-2}^{\frac{N}{2}}\varLambda_g\(\xi\)\mu^2+\operatorname{O}\(\mu^4 I_{N-2}^{\frac{N+2}{2}}+\mu^{N-2}\)\) \nonumber\\
&\quad=\left\{ \begin{aligned}
&\frac{ 5 K_6^{-6}}{12}
\mu^2 S_g\(\xi\)+48 \omega_5 \varLambda_g\(\xi\)\mu^4 \ln \mu+\operatorname{O}\(\mu^4\) &\hbox{if } N=6\\
&\frac{4 (N-1)K_N^{-N}}{N(N-2)(N-4)}
\mu^2\(S_g\(\xi\)-\frac{1}{2\(N-6\)}\varLambda_g\(\xi\)\mu^2\)+\operatorname{O}\(\mu^5\)& \hbox{if }N\geq 7
\end{aligned}\right.
\label{AubinEq13}
\end{align}
in view of \eqref{AubinEq16}. Similarly, by \eqref{AubinEq3}, we have that
\begin{equation}
\eps \int_MhU_{\mu,\xi}^2dv_g= 
\frac{4 (N-1)K_N^{-N}}{N(N-2)(N-4)}
h\(\xi\) \eps \mu^2+\operatorname{o}\(\eps \mu^2\) \label{AubinEq14}
\end{equation}
By \eqref{llp} and \eqref{AubinEq5}, we compute
\begin{align}
&\int_MU_{\mu,\xi}^{2^*}dv_g=[N(N-2)]^{\frac{N}{2}}\int_0^{\frac{r_0}{2}}\frac{\mu^N}{\(\mu^2+r^2\)^N}\int_{\partial B_\xi\(r\)}d\sigma_gdr+\operatorname{O}\(\mu^N\)\nonumber\\
&\quad=[N(N-2)]^{\frac{N}{2}}\omega_{N-1}\int_0^{\frac{r_0}{2\mu}}\frac{r^{N-1}}{\(1+r^2\)^N}\(1-\frac{1}{6N}S_g\(\xi\)\mu^2r^2+A_g\(\xi\)
\mu^4r^4\)dr+\operatorname{O}\(\mu^5\)\nonumber\\
&\quad=\frac{[N(N-2)]^{\frac{N}{2}}}{2}\omega_{N-1}\(I_N^{\frac{N-2}{2}}-\frac{1}{6N}I_N^{\frac{N}{2}}S_g\(\xi\)\mu^2+I_N^{\frac{N+2}{2}}
A_g\(\xi\)\mu^4\)+\operatorname{O}\(\mu^5\) \nonumber\\
&\quad=K_N^{-N} \(1-\frac{1}{6\(N-2\)}S_g\(\xi\)\mu^2+\frac{N\(N+2\)}{\(N-2\)\(N-4\)}A_g\(\xi\)\mu^4\)+\operatorname{O}\(\mu^5\)\label{AubinEq15}
\end{align}
in view of \eqref{AubinEq16}. Finally, the claimed expansions follow by \eqref{AubinEq12},  \eqref{AubinEq13},  \eqref{AubinEq14} and \eqref{AubinEq15} in view of \eqref{AubinEq6}--\eqref{AubinEq9}.
\end{proof}

\section{The Lyapunov-Schmidt reduction argument}\label{Sec4}

Since equation \eqref{Eq1} can be re-written as \eqref{Eq1b}, the function $u=\mathcal{W}_{\mu,\xi}+\phi$ does solve \eqref{Eq1} as soon as
\begin{equation}\label{eq1c}
\hat L_{\mu,\xi}(\phi)=-\mathcal{R}_{\mu,\xi}-N_{\mu,\xi}(\phi),
\end{equation}
where $\mathcal{R}_{\mu,\xi}$ is given in \eqref{rmx}, 
$$N_{\mu,\xi}(\phi)=-i^*\[\(\mathcal{W}_{\mu,\xi}+\phi\)_+^p-\(\mathcal{W}_{\mu,\xi}\)_+^p -p \(\mathcal{W}_{\mu,\xi}\)_+^{p-1} \phi \] $$
is the nonlinear term (quadratic in $\phi$) and
$$\begin{array}{lccl}\hat L_{\mu,\xi}: &H_g^1(M)& \to & H_g^1(M)\\
&\phi& \mapsto & \phi-i^*\[p \(\mathcal{W}_{\mu,\xi}\)_+^{p-1} \phi- \eps h \phi\]
\end{array}$$
is the linearized operator of \eqref{Eq1b} at $\mathcal{W}_{\mu,\xi}$.

\medskip
Since $\mathcal{W}_{\mu,\xi}$ is a small perturbation of $\mathcal{U}_{\mu,\xi}$, as $\eps, \mu \to 0$ the operator $\hat L_{\mu,\xi}$ in balls with radii of order $\mu$ looks pretty much as a scaling of the limiting operator $L_\infty:\ \Phi \to \Phi+(\Delta)^{-1} \[p U^{p-1} \Phi\]$, where $U$ is given in \eqref{defU}. It is well known (see Bianchi--Egnell~\cite{BiaEgn}) that 
$$\hbox{ker }L_\infty=\hbox{Span }\left\{\Phi^0,\Phi^1,\dots, \Phi^N \right\},$$
where
\begin{equation} \label{Eq1415}
\Phi^0(y)=\frac{1-|y|^2}{(1+|y|^2)^{\frac{N}{2}}}\, ,\qquad \Phi^i(y)=\frac{y^i}{(1+|y|^2)^{\frac{N}{2}}}\quad\forall \ i=1,\dots,N.
\end{equation}
Since there is no hope for the full invertibility of $\hat L_{\mu,\xi}$ in $H_g^1(M)$, let us introduce the ``asymptotic kernel" $K_{\mu,\xi}$ and its ``orthogonal space" $K^\perp_{\mu,\xi}$ as
$$K_{\mu,\xi}=\hbox{Span }\left\{ Z^0_{\mu,\xi},\dotsc, Z^N_{\mu,\xi} \right\}$$
and
$$K^\perp_{\mu,\xi}=\left\{\phi\in H^1_g\(M\):\ \int_M \mathcal{U}_{\mu,\xi}^{p-1} Z^i_{\mu,\xi} \phi d\mu_g =0\quad\forall\ i=0,\dotsc,N\right\},$$
where
$$Z^i_{\mu,\xi}(z)=\chi\(d_g(z,\xi)\) \mu^{\frac{2-N}{2}} \Phi^i \(\frac{\exp_\xi^{-1}(z)}{\mu}\)$$
for $i=0,\dotsc,N$, with $\Phi^i$ given by \eqref{Eq1415}. Letting $\Pi_{\mu,\xi}$ and $\Pi^\perp_{\mu,\xi}$ be the projectors of $H^1_g\(M\)$ onto the respective subspaces, equation \eqref{eq1c} is equivalent to solving 
\begin{align}\label{s1}
& L_{\mu,\xi}(\phi)= -\Pi^\perp_{\mu_\xi}\(\mathcal{R}_{\mu,\xi}+N_{\mu,\xi}(\phi)\),\\
\label{s2}
& \Pi _{\mu_\xi} \big(\hat L_{\mu,\xi}(\phi)\big)= -\Pi_{\mu_\xi}\(\mathcal{R}_{\mu,\xi}+N_{\mu,\xi}(\phi)\)
\end{align}
for some $\phi \in K^\perp_{\mu,\xi}$, where $L_{\mu,\xi}= \Pi^\perp_{\mu,\xi} \circ \hat L_{\mu,\xi}: K^\perp_{\mu,\xi} \to K^\perp_{\mu,\xi}$. First we can solve equation \eqref{s1}, a rather standard result in this context (see for example Musso--Pistoia~\cite{mupi4}):

\begin{lemma}\label{Lem1}
There exists a positive constant $C_0$ such that, for any $\eps,\mu$ small and any $\xi\in M$, there holds
$$\left\|L _{\mu,\xi}\(\phi\)\right\|  \ge C_0\left\|\phi\right\| $$
for all $\phi \in K^\perp_{\mu,\xi }$. As a consequence, \eqref{s1} admits a unique solution $\phi_{\mu,\xi} \in K^\perp_{\mu,\xi }$, which is continuously differentiable in $\mu$ and $\xi$, so that
\begin{equation}\label{Lem2Eq1tt}
\left\|\phi_{\mu,\xi} \right\|=
\left\{\begin{array}{ll} \operatorname{o}\big(\mu^2\sqrt{|\ln \mu|}+\sqrt \eps \mu \big) &\hbox{if } N=6\\
\operatorname{o}\(\mu^2+\sqrt \eps \mu\)&\hbox{if } N\geq 7. \end{array} \right.
\end{equation}
\end{lemma}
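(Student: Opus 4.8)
The plan is to carry out the classical Lyapunov--Schmidt reduction around a single concentration point, exactly as in Musso--Pistoia~\cite{mupi4}. \emph{The coercivity estimate.} I would prove $\|L_{\mu,\xi}(\phi)\|\ge C_0\|\phi\|$ on $K^\perp_{\mu,\xi}$ by contradiction: suppose there are $\eps_k\to0$, $\mu_k\to0$, $\xi_k\in M$ and $\phi_k\in K^\perp_{\mu_k,\xi_k}$ with $\|\phi_k\|=1$ and $h_k:=L_{\mu_k,\xi_k}(\phi_k)\to0$ in $H^1_g(M)$. Since $L_{\mu,\xi}=\Pi^\perp_{\mu,\xi}\circ\hat L_{\mu,\xi}$ on $K^\perp_{\mu,\xi}$, this means
$$\hat L_{\mu_k,\xi_k}(\phi_k)=\phi_k-i^*\big[p\,(\mathcal{W}_{\mu_k,\xi_k})_+^{p-1}\phi_k-\eps_k h\phi_k\big]=\sum_{i=0}^N c_i^k\,Z^i_{\mu_k,\xi_k}+h_k$$
for suitable $c_i^k\in\mathbb{R}$, and $(c_i^k)_k$ is bounded since $\|\hat L_{\mu_k,\xi_k}\|$ is uniformly bounded and $\{Z^i_{\mu_k,\xi_k}\}_i$ is, after rescaling, uniformly linearly independent. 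Setting $\widetilde\phi_k(y):=\mu_k^{\frac{N-2}{2}}\phi_k(\exp_{\xi_k}(\mu_k y))$, a standard localization argument based on $\|\phi_k\|=1$ shows that $(\widetilde\phi_k)_k$ is bounded in $D^{1,2}(\mathbb{R}^N)$; passing to the Euclidean limit in the displayed equation, and using that the perturbation $\eps_k h\phi_k$ and the correcting term $\mu_k^2\mathcal{V}_{\mu_k,\xi_k}$ hidden inside $(\mathcal{W}_{\mu_k,\xi_k})_+^{p-1}$ only contribute lower-order terms, one obtains that, up to a subsequence, $c_i^k\to0$ and $\widetilde\phi_k\rightharpoonup\widetilde\phi_\infty$ weakly in $D^{1,2}(\mathbb{R}^N)$, where $-\Delta\widetilde\phi_\infty=pU^{p-1}\widetilde\phi_\infty$ in $\mathbb{R}^N$; hence $\widetilde\phi_\infty\in\ker L_\infty=\Span\{\Phi^0,\dots,\Phi^N\}$.

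\emph{The limit vanishes, and the contradiction.} The orthogonality conditions $\int_M\mathcal{U}_{\mu_k,\xi_k}^{p-1}Z^i_{\mu_k,\xi_k}\phi_k\,dv_g=0$ rescale and, passing to the limit, yield $\int_{\mathbb{R}^N}U^{p-1}\Phi^i\widetilde\phi_\infty\,dy=0$ for $i=0,\dots,N$. Writing $\widetilde\phi_\infty=\sum_j a_j\Phi^j$ and noting that the matrix with entries $\int_{\mathbb{R}^N}U^{p-1}\Phi^i\Phi^j\,dy=\frac1p\int_{\mathbb{R}^N}\langle\nabla\Phi^i,\nabla\Phi^j\rangle\,dy$ is the positive definite Gram matrix of the linearly independent family $\{\Phi^i\}$, I get $\widetilde\phi_\infty=0$. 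Since the embedding $D^{1,2}(\mathbb{R}^N)\hookrightarrow L^2(U^{p-1}\,dy)$ is compact — split $\mathbb{R}^N$ into $B_R$, where one uses the strong $L^2_{\mathrm{loc}}$ convergence, and its complement, where one uses $\|U^{p-1}\|_{L^{N/2}(\mathbb{R}^N\setminus B_R)}\to0$ together with the uniform $L^{2^*}$ bound — it follows that $\int_{\mathbb{R}^N}U^{p-1}\widetilde\phi_k^2\,dy\to0$, hence, undoing the rescaling, $p\int_M(\mathcal{W}_{\mu_k,\xi_k})_+^{p-1}\phi_k^2\,dv_g\to0$. On the other hand, pairing the displayed equation with $\phi_k$ and using $\langle i^*(w),\phi_k\rangle=\int_M w\phi_k\,dv_g$ gives
$$1=\|\phi_k\|^2=p\int_M(\mathcal{W}_{\mu_k,\xi_k})_+^{p-1}\phi_k^2\,dv_g-\eps_k\int_M h\phi_k^2\,dv_g+\sum_{i=0}^N c_i^k\langle Z^i_{\mu_k,\xi_k},\phi_k\rangle+\langle h_k,\phi_k\rangle,$$
and every term on the right tends to $0$ (the potential term is $\operatorname{O}(\eps_k\|h\|_{L^\infty}\|\phi_k\|_{L^2}^2)$, controlled by the coercivity of $\Delta_g+\alpha_N S_g$ and the Sobolev inequality, and the third one vanishes since $c_i^k\to0$), a contradiction that proves the estimate.

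\emph{Solving \eqref{s1} and the norm bound.} For each $\mu,\xi$ the operator $L_{\mu,\xi}$ is a compact perturbation of the identity on $K^\perp_{\mu,\xi}$ — indeed $\phi\mapsto p\,(\mathcal{W}_{\mu,\xi})_+^{p-1}\phi-\eps h\phi$ maps $H^1_g(M)$ compactly into $L^{\frac{2N}{N+2}}(M)$ (its coefficient being bounded on $M$ for fixed $\mu$) and $i^*$ is bounded — hence Fredholm of index zero; by the previous step it is injective, so invertible, with $\|L_{\mu,\xi}^{-1}\|\le C_0^{-1}$ uniformly for $\eps,\mu$ small and $\xi\in M$. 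Thus \eqref{s1} is equivalent to the fixed-point problem $\phi=T_{\mu,\xi}(\phi):=-L_{\mu,\xi}^{-1}\Pi^\perp_{\mu,\xi}\big(\mathcal{R}_{\mu,\xi}+N_{\mu,\xi}(\phi)\big)$ on $K^\perp_{\mu,\xi}$. From the pointwise bound $|(a+b)_+^p-a_+^p-p\,a_+^{p-1}b|\le C|b|^p$ (valid for $1<p\le2$, i.e.\ $N\ge6$) and the Sobolev inequality one gets $\|N_{\mu,\xi}(\phi)\|\le C\|\phi\|^p$ and $\|N_{\mu,\xi}(\phi_1)-N_{\mu,\xi}(\phi_2)\|\le C(\|\phi_1\|+\|\phi_2\|)^{p-1}\|\phi_1-\phi_2\|$; combined with the bound on $\|\mathcal{R}_{\mu,\xi}\|$ from Lemma~\ref{Lem2}, for $\eps,\mu$ small the map $T_{\mu,\xi}$ sends the ball $\{\phi\in K^\perp_{\mu,\xi}:\|\phi\|\le 2C_0^{-1}\|\mathcal{R}_{\mu,\xi}\|\}$ into itself and is a contraction there, so the Banach fixed point theorem furnishes a unique solution $\phi_{\mu,\xi}$ in it, with $\|\phi_{\mu,\xi}\|\le 2C_0^{-1}\|\mathcal{R}_{\mu,\xi}\|$. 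Inserting the bounds of Lemma~\ref{Lem2} — whose purely geometric part has order strictly smaller than $\mu^2\sqrt{|\ln\mu|}$ when $N=6$ and than $\mu^2$ when $N\ge7$, while the mixed part is $\operatorname{o}(\sqrt\eps\,\mu)$ — gives exactly \eqref{Lem2Eq1tt}; and since $(\phi,\mu,\xi)\mapsto\phi-T_{\mu,\xi}(\phi)$ is of class $C^1$ with uniformly invertible differential in $\phi$, the implicit function theorem yields the continuous differentiability of $\phi_{\mu,\xi}$ in $(\mu,\xi)$. The only genuinely delicate point is the blow-up analysis of the first two steps: one must carefully track the errors coming from the cutoff $\chi$, from comparing geodesic normal with Euclidean coordinates, and from the $\mu^2\mathcal{V}_{\mu,\xi}$ and $\eps h$ perturbations, so that the rescaled limit solves the pure limiting equation and the limiting orthogonality relations, and must therefore vanish.
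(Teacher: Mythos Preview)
The paper does not actually prove this lemma: it simply declares the result ``a rather standard result in this context'' and refers to Musso--Pistoia~\cite{mupi4}, adding only the remark that the estimate~\eqref{Lem2Eq1tt} depends on the error bound~\eqref{Lem2Eq1}. Your proposal is a correct and careful elaboration of precisely that standard argument---contradiction/blow-up for the coercivity, Fredholm plus contraction for the fixed point, and the check that the rates in Lemma~\ref{Lem2} are indeed $\operatorname{o}(\mu^2\sqrt{|\ln\mu|}+\sqrt\eps\,\mu)$ for $N=6$ and $\operatorname{o}(\mu^2+\sqrt\eps\,\mu)$ for $N\ge7$---so it is entirely in line with what the paper invokes.
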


Let us just stress out that the estimate \eqref{Lem2Eq1tt} heavily depends on \eqref{Lem2Eq1}. The need of improving the ansatz in Section 2 comes out from getting the correct smallness rate of $\phi$ as expressed by \eqref{Lem2Eq1tt}. Finally, we have all the ingredients to prove our main result.

\begin{proof}[Proof of Theorem~\ref{Th}]
A first well known fact (see for example Musso--Pistoia~\cite{mupi4}) is the equivalence between equation \eqref{s2} and the search of critical points for
$$\mathcal{\widetilde J}_\eps(\mu,\xi)=J_\eps \(\mathcal{W}_{\mu,\xi}+\phi_{\mu,\xi}\),$$
where $\phi_{\mu,\xi}$ is given by Lemma~\ref{Lem1}.
We just need to prove that
\begin{equation}\label{energy1.1}
J_\eps  \(\mathcal{W}_{\mu,\xi}+\phi_{\mu,\xi}\)-J_\eps  \(\mathcal{W}_{\mu,\xi}\)=\left\{\begin{aligned}
&\operatorname{o}\(\mu^4 |\ln \mu|+\eps \mu^2\) &\hbox{if }N=6\\
&\operatorname{o}\(\mu^4+\eps \mu^2\)&\hbox{if }N\geq 7
\end{aligned} \right.
\end{equation}
as $\eps,\mu \to 0$. Indeed, we have that
\begin{align*}
&J_\eps  \(\mathcal{W}_{\mu,\xi}+\phi_{\mu,\xi}\)-J_\eps  \(\mathcal{W}_{\mu,\xi}\)=\int_M \(\<\nabla \mathcal{R}_{\mu,\xi},\nabla \phi_{\mu,\xi}\>_g+\alpha_N S_g \mathcal{R}_{\mu,\xi} \phi_{\mu,\xi}-\(W_{\mu,\xi}\)^p_+\phi_{\mu,\xi}\)  dv_g\nonumber\\ 
&\qquad+{1\over2}\int_M|\nabla \phi_{\mu,\xi}|_g^2dv_g+{1\over2}\int_M\(\alpha_N S_g+\eps   h\) \phi_{\mu,\xi} ^2 dv_g\nonumber\\ 
&\qquad-\frac{1}{p+1}
 \int_M   \[ \(\mathcal{W}_{\mu,\xi}+\phi_{\mu,\xi}\)_+^{p+1}-\(\mathcal{W}_{\mu,\xi}\)_+^{p+1}
 -(p+1)\(\mathcal{W}_{\mu,\xi}\)_+^p \phi_{\mu,\xi}\] dv_g\\
&\quad=\operatorname{O}\( \| \mathcal{R}_{\mu,\xi}\|\ \|\phi_{\mu,\xi}\|+  \|\phi_{\mu,\xi}\|^2+\int_M
\(\mathcal{W}_{\mu,\xi}\)_+^{p-1} \phi_{\mu,\xi}^2 dv_g+\int_M \phi_{\mu,\xi}^{p+1}dv_g \)\\
&\quad=\operatorname{O}\( \| \mathcal{R}_{\mu,\xi}\|\ \|\phi_{\mu,\xi}\|+  \|\phi_{\mu,\xi}\|^2\)
\end{align*}
by the Sobolev embedding $H_g^1(M)\hookrightarrow L^{p+1}(M)$ and the H\"older's inequality. By \eqref{Lem2Eq1} and \eqref{Lem2Eq1tt} we then deduce the validity of \eqref{energy1.1}. Setting
$$\mu(d)=d \left\{ \begin{array}{ll} l^{-1}(\eps)&\hbox{if }N=6\\
\sqrt \eps &\hbox{if }N\geq 7, \end{array}\right. $$
where $l:(0 , e^{-\frac{1}{2}})\to (0,\frac{e^{-1}}{2})$ is defined as $l(\mu)=-\mu^2 \ln \mu$, by Proposition~\ref{Pr2} 
 and \eqref{energy1.1} we deduce the following asymptotic estimates:
 $$\mathcal{J}(d,\xi):=\frac{\mathcal{\widetilde J}_\eps(\mu(d),\xi)-K_N^{-N}}{\eps^2}\(\ln l^{-1}(\eps)\)^\gamma=c_2 d^2 h(\xi)-c_3 d^4 \left|\Weyl_g(\xi)\right|^2_g+\operatorname{o}(1)$$
as $\eps \to 0$, uniformly with respect to $\xi \in M$ and to $d$ in compact subsets of $(0,\infty)$, where $c_2,c_3>0$ are suitable constants, $\gamma=1$ when $N=6$ and $\gamma=0$ when $N\geq 7$. Letting $\mathcal{D} \subset (0,\infty) \times M$ be a $C^0$--{\it stable critical set} of $\widetilde E$ and $U$ be a compact neighborhood of $\mathcal{D}$ in $(0,\infty) \times M$, by the definition of stability it follows that $\mathcal{J}$ has a critical point $\(d_\varepsilon,\xi_\varepsilon\)\in  U \subset (0,\infty) \times M$, for $\varepsilon$ small. Up to a subsequence and taking $U$ smaller and smaller, we can assume that $\(d_\varepsilon,\xi_\varepsilon\) \to \(t_0,\xi_0\)$ as $\varepsilon \to 0$ with $\xi_0 \in \pi(\mathcal{D})$. By elliptic regularity theory $u_\varepsilon=\mathcal{W}_{\mu(d_\eps),\xi_\eps}+\phi_{\mu(d_\eps),\xi_\eps}$ is a solution of \eqref{Eq1}. Since $\xi_\varepsilon\to\xi_0$ and $\left\|\phi_{\mu(d_\varepsilon),\xi_\varepsilon} \right\| \to0$ as $\varepsilon \to 0$, it is easily seen that $u_\varepsilon>0$ and $u_\varepsilon^{2^*} \rightharpoonup K_N^{-N}\delta_{\xi_0}$ in the measures sense as $\varepsilon \to 0$ (see for example Rey~\cite{Rey}), where $\delta_\xi$ denotes the Dirac mass measure at $\xi$. From very basic facts concerning the asymptotic analysis of solutions of Yamabe-type equations (see for example Druet--Hebey~\cite{DruHeb2} and Druet--Hebey--Robert\cite{DruHebRob}), we get that the family $(u_\varepsilon)_\varepsilon$ blows up at the point $\xi_0$ as $\varepsilon \to 0$.
\end{proof}

\end{document}